\newtheorem{thm}{Theorem}[section]
\newtheorem{prop}[thm]{Proposition}
\newtheorem{lemma}[thm]{Lemma}
\newtheorem{defi}[thm]{Definition}
\newtheorem{example}[thm]{Example}
\newtheorem{rem}[thm]{Remark}
\newtheorem{cor}[thm]{Corollary}
\def\QQ{\mathord{\mathbb{Q}}}
\def\ZZ{\mathord{\mathbb{Z}}}
\def\LL{\mathbb{L}}
\def\ket#1{\vert #1 \rangle}
\def\bra#1{\langle #1 \vert}
\def\kket#1{\vert #1 \rangle\!\rangle}
\def\bbra#1{\langle\!\langle #1 \vert}
\def\mket#1{\vert #1 )}
\def\mbra#1{( #1 \vert}
\def\inner#1#2{\left\langle #1|#2\right\rangle} 
\def\iinner#1#2{\langle\!\langle #1|#2\rangle\!\rangle}
\title[Generating functions of dual $K$-theoretic $P$- and $Q$-functions]{Generating functions of dual $K$-theoretic $P$- and $Q$-functions and boson-fermion correspondence}
\author[S. Iwao]{Shinsuke Iwao}
\address[S. Iwao]{Faculty of Business and Commerce, Keio University, Hiyosi 4-1-1, Kohoku-ku, Yokohama, Kanagawa 223-8521, Japan}
\email{iwao-s@keio.jp}
\keywords{$K$-theoretic $Q$-function, boson-fermion correspondence, neutral fermion, dual symmetric function.}
\subjclass{05E05, 05E14, 13M10, 14N15}
\begin{document}
\maketitle

\begin{abstract}
In this paper, we present a new algebraic description of Ikeda-Naruse's $K$-theoretic Schur $P$- and $Q$-functions and their dual functions in terms of neutral fermion operators. 
We introduce four families of ``$\beta$-deformed neutral-fermion operators'' depending on a parameter $\beta$, which reduce to the usual neutral-fermion operators when $\beta$ is zero.
Using these operators, we introduce two families of $\beta$-deformed vertex operators, power sums, and boson-fermion correspondences.
From commutation relations among these operators, we naturally derive the $K$-theoretic Cauchy kernel of Nakagawa-Naruse. 
Exploiting this fact, we show that the four $K$-theoretic functions can be realized as vacuum expectation values of certain $\beta$-deformed fermionic operators.
This presentation also allows us to derive generating functions for the dual $K$-theoretic $P$-and $Q$-functions, as conjectured by Nakagawa-Naruse.
\end{abstract}

\section{Introduction}

\subsection{Overview}\label{sec:Background}

The \textit{$K$-theoretic Schur $P$- and $Q$-functions}, introduced by Ikeda and Naruse~\cite{IKEDA201322}, are distinguished families of symmetric functions representing Schubert classes in the $K$-theory of maximal isotropic symplectic and orthogonal Grassmannians.  
The $K$-theoretic Schur $P$-function (abbreviated as the $K$-$P$ function) is typically denoted by \( GP_\lambda \), and the $K$-theoretic Schur $Q$-function (abbreviated as the $K$-$Q$ function) is denoted by \( GQ_\lambda \).  
These functions are indexed by \textit{strict partitions} \( \lambda = (\lambda_1 > \cdots > \lambda_r > 0) \), i.e., strictly decreasing sequences of positive integers.  
(They are also referred to as \textit{shifted stable Grothendieck polynomials}~\cite{lewis2024combinatorial}.)

For any finite \( n > 0 \), two polynomials \( GP_\lambda(x_1, \dots, x_n) \) and \( GQ_\lambda(x_1, \dots, x_n) \) are defined as symmetric polynomials in \( x_1, \dots, x_n \) with coefficients in \( \mathbb{Z}[\beta] \), where \( \beta \) is a formal parameter~\cite[Definition 2.1]{IKEDA201322}.  
Setting \( \beta = 0 \), these polynomials reduce to the Schur \( P \)- and \( Q \)-polynomials, respectively.  
The functions \( GP_\lambda \) and \( GQ_\lambda \) are defined as the ``stable limits'' \( n \to \infty \) of \( GP_\lambda(x_1, \dots, x_n) \) and \( GQ_\lambda(x_1, \dots, x_n) \).  
However, since \( GP_\lambda(x_1, \dots, x_n) \) and \( GQ_\lambda(x_1, \dots, x_n) \) are not stable in the strict sense, the functions \( GP_\lambda \) and \( GQ_\lambda \) do not lie in the usual ring of symmetric functions \( \Lambda \), but rather in the completed ring of symmetric functions \( \widehat{\Lambda} \).

There exists a subring \( G\Gamma\subset \widehat{\Lambda} \), characterized by the \textit{$K$-$Q$-cancellation property}~\cite[Definition 1.1]{IKEDA201322}, that contains all $GP_\lambda$ and $GQ_\lambda$.
Any element of $G\Gamma$ can be expressed as an infinite $\ZZ[\beta]$-linear combination of $GP_\lambda$.
On the other hand, all infinite $\ZZ[\beta]$-linear combinations of $GQ_\lambda$ form a proper subalgebra $G\Gamma_{+}\subsetneq G\Gamma$.
It was shown in \cite{IKEDA201322} that,
at $\beta=-1$ limit,
$G\Gamma$ ({\textit{resp.}~$G\Gamma_{+}$}) is isomorphic to the $K$-theory of the Grassmannian of type $B$ and $D$ ({\textit{resp.}~type $C$}).
Through the isomorphisms, $GP_\lambda$ and $GQ_\lambda$ represent the Schubert class $[\mathcal{O}_{\Omega_\lambda}]$.

It has been reported that the $K$-$P$- and $K$-$Q$-functions possess algebraic properties that extend the characteristics of the Schur $P$- and $Q$-functions.
As with the ordinary Schur $P$- and $Q$-functions, it is expected that they admit a vacuum expectation value representation using vertex operators.
In~\cite{iwao2021neutralfermionic}, the author of the present paper introduced an algebraic characterization of $GQ_\lambda$ by using the \textit{$\beta$-deformed neutral fermion operators}.
This naturally raises the question of whether the other $K$-theoretic functions admit similar expressions.

In this paper, we extend the idea of~\cite{iwao2021neutralfermionic} to obtain new fermionic presentations of $GP_\lambda$ and the dual functions $gq_\lambda$ and $gp_\lambda$.
The central idea of the construction is the introduction of four families of $\beta$-deformed neutral fermion operators $\phi_n^{(\beta)}$, $\phi^{[\beta]}_n$, $\Phi_n^{(\beta)}$, and $\Phi^{[\beta]}_n$ (Definitions \ref{defi:deformed_fermions} and \ref{def:Phi}).
Using these operators, we define several ``$\beta$-deformed'' operators, including the \textit{$\beta$-deformed vertex operators} $e^{\mathcal{H}^{(\beta)}(x)}$ and $e^{\mathcal{H}^{[\beta]}(x)}$.
Figure \ref{fig:beta-objects} summarizes the $\beta$-deformed operators introduced in this paper and describes their behavior in the limit $\beta\to 0$.

\begin{figure}[htbp]
\centering
\begin{tabular}{c|c|c|c|c}
&
$K$-theory
& Dual
 & $\beta\to 0$ limit & 
Notable relations \\\hline\hline
%
%
Operators for $GQ,gq$
&
$\phi^{(\beta)}_n$ & $\phi^{[\beta]}_n$& $\phi_n$ & 
$\phi^{[\beta]}_n=(-1)^n(\phi^{(-\beta)}_{-n})^\ast$
\\\hline
%
%
Operators for $GP,gp$
&
$\Phi^{(\beta)}_n$ & $\Phi^{[\beta]}_n$ & $\frac{1}{2}\phi_n$
& 
$
\begin{aligned}
&[(\Phi_m^{(\beta)})^\ast,\phi_n^{[\beta]}]_+\\[-.1em]
&=[(\Phi_m^{[\beta]})^\ast,\phi_n^{(\beta)}]_+=\delta_{m,n}
\end{aligned}
$\\\hline
%
%
Current operator
&
$b_n^{(\beta)}$ & $b^{[\beta]}_n$ & $b_n$ & 
$b_n^{[\beta]}=(b^{(\beta)}_{-n})^\ast$
\\\hline
%
%
Hamiltonian
&
$\mathcal{H}^{(\beta)}(x)$ & $\mathcal{H}^{[\beta]}(x)$ &
$\mathcal{H}(x)$ & 
\\\hline
%
%
Shift operator
&
$e^{\Theta}$ & $e^\theta$ & $1$ & $\theta=\Theta^\ast$
\end{tabular}

\caption{A list of $\beta$-deformed operators.
Operators with the superscript ${}^{(\beta)}$ are used to construct the $K$-theoretic functions $GP_\lambda$ and $GQ_\lambda$,  
while those with the superscript ${}^{[\beta]}$ are used to construct their duals $gq_\lambda$ and $gp_\lambda$.}
\label{fig:beta-objects}
\end{figure}

Moreover, using $e^{\mathcal{H}^{(\beta)}(x)}$ and $e^{\mathcal{H}^{[\beta]}(x)}$, we introduce two versions of \textit{$\beta$-deformed boson-fermion correspondences}, denoted by $\Omega$ and $\chi$ (\S \ref{sec:boson_fermion}).
These are linear maps that send fermion operators to symmetric functions.
The key observation is that these maps establish a direct connection between vacuum expectation values and the \textit{$K$-theoretic Cauchy kernel} of Nakagawa-Naruse~\cite[Definition 5.3]{nakagawa2016generalized}:
\[
\prod_{i,j}
\frac{1-\overline{x_i}y_j}{1-x_iy_j},\qquad\mbox{where}\quad
\overline{x_i}=\frac{-x_i}{1+\beta x_i}.
\]
Here, the dual functions $gq_\lambda$ and $gp_\lambda$ are defined as the unique symmetric functions that satisfy the following Cauchy identity (see \cite[Definition 1.2]{lewis2024combinatorial}):
\begin{equation}\label{eq:Cauchy_identity}
\sum_{\lambda:\mathrm{strict}}GQ_\lambda(x)gp_\lambda(y)
=
\sum_{\lambda:\mathrm{strict}}GP_\lambda(x)gq_\lambda(y)
=
\prod_{i,j}
\frac{1-\overline{x_i}y_j}{1-x_iy_j}.
\end{equation}

The main result (Theorems \ref{thm:GP}, \ref{thm:gq}, and \ref{thm:fermion_of_gp}) of this paper is summarized as follows:
\begin{thm}
Let $\Omega$ and $\chi$ be the $\beta$-deformed boson-fermion correspondences defined in \S \ref{sec:boson_fermion}.
For a strict partition $\lambda$, there exist four vectors
\[
\ket{\lambda}_{Q},\quad \ket{\lambda}_{P},\quad 
\ket{\lambda}_{q},\ \text{and}\quad \ket{\lambda}_p
\]
(Equations \eqref{eq:vector_Q}, \eqref{eq:vector_P}, \eqref{eq:ket_q}, and \eqref{eq:ket_p})
in the fermion Fock space satisfying
\[
GQ_\lambda=\Omega(\ket{\lambda}_Q),\quad
GP_\lambda=\Omega(\ket{\lambda}_P),\quad
gq_\lambda=\chi(\ket{\lambda}_q),\ \text{and}\quad
gp_\lambda=\chi(\ket{\lambda}_p).
\]
The vectors $\ket{\lambda}_{Q}$, $\ket{\lambda}_{P}$, $\ket{\lambda}_{q}$, and $\ket{\lambda}_{p}$ are constructed using $\phi_n^{(\beta)}$, $\Phi_n^{(\beta)}$, $\phi_n^{[\beta]}$, and $\Phi_n^{[\beta]}$, respectively.
\end{thm}

We briefly outline the construction of the paper.
In Section \ref{sec:Pre}, we give a brief review on the basic concept of neutral fermions.
In Section \ref{sec:beta_def_op}, we define the $\beta$-deformed neutral fermions $\phi^{(\beta)}_n$ and $\phi^{[\beta]}_n$. 
They are defined as an infinite $\QQ(\beta)$-linear combination of the (usual) neutral fermion operators $\phi_n$.
We also define two operators $\Phi^{(\beta)}_n$, $\Phi^{[\beta]}_n$ (\S \ref{sec:beta_ferm}), that satisfy the anti-commutation relations $[(\Phi_m^{(\beta)})^\ast,\phi^{[\beta]}_n ]_+=[(\Phi_m^{[\beta]})^\ast,\phi^{(\beta)}_n ]_+=\delta_{m,n}$ (Lemma \ref{lemma:duality}), where $x\mapsto x^\ast$ denotes the anti-algebra automorphism introduced in \S \ref{sec:Fock_space}.
These relations serve as the $\beta$-deformation of the duality relation $[(\tfrac{1}{2}\phi_m)^\ast,\phi_n]_+=\delta_{m,n}$.

There exist two families of $\beta$-deformed generalization of the power sums $p_n(x)=x_1^n+x_2^n+\cdots$, denoted by $p^{(\beta)}_n$ and $p^{[\beta]}_n$ (\S \ref{sec:beta_def_powersum}).
Importantly, they admit the $K$-theoretic Cauchy identity (Lemma \ref{lemma:Cauchy}):
\begin{equation}\label{eq:Cauchy_2}
\sum_{\lambda:\mathrm{odd}}2^{\ell(\lambda)}z_\lambda^{-1}
p_\lambda^{(\beta)}(x)p_\mu^{[\beta]}(y)=
\prod_{i,j}\frac{1-\overline{x_i}y_j}{1-x_iy_j},\qquad z_\lambda=\prod_{i\geq 1}i^{m_i}\cdot m_i!,
\end{equation}
where $m_i(\lambda)=\sharp\{k\,|\,\lambda_k=i\}$.
From \eqref{eq:Cauchy_identity} and \eqref{eq:Cauchy_2}, we show the existence of an inner product $\langle\cdot,\cdot\rangle$ (Eq.~\eqref{eq:def_of_bilinear_form}) satisfying $\langle p^{(\beta)}_\lambda,p^{[\beta]}_\lambda\rangle=2^{\ell(\lambda)}z_\lambda\delta_{\lambda,\mu}$ for odd paritions $\lambda,\mu$, and $\langle GQ_\lambda,gp_\mu\rangle=\langle GP_\lambda,gq_\mu\rangle=\delta_{\lambda,\mu}$ for strict partitions $\lambda,\mu$.
As a result, the $\beta$-deformed boson-fermion correspondences $\Omega$ and $\chi$, defined in \S \ref{sec:boson_fermion}, preserve the two bilinear forms: the vacuum expectation value of fermionic operators and the inner product of symmetric functions (see \eqref{eq:inner_pres}).

Section \ref{sec:comm_rel} presents some technical lemmas that gives commutation relations of the $\beta$-deformed operators.
In Section \ref{sec:main_part}, we give fermionic presentations of the $K$-theoretic functions $GP_\lambda$ and $GQ_\lambda$ (Theorem \ref{thm:prev_main_theorem}).
The proof is given by comparing the vacuum expectation values of $\Omega(\ket{\lambda}_P)$ and $\Omega(\ket{\lambda}_Q)$ with the generating functions of $GP_\lambda$, $GQ_\lambda$, which were presented in the previous work~\cite{nakagawa2018universalfactrial} by Nakagawa-Naruse.
In Section \ref{sec:gp_and_gq}, we introduce two vectors $\ket{\lambda}_q$ and $\ket{\lambda}_p$ in the fermion Fock space, and present the algebraic descriptions $gq_{\lambda}=\chi(\ket{\lambda}_q)$ (Theorem \ref{thm:gq}) and $gp_{\lambda}=\chi(\ket{\lambda}_p)$ (Theorem \ref{thm:fermion_of_gp}) of the dual $K$-theoretic functions.
These expressions are derived by using the orthonormality ${}_Q\inner{\mu}{\lambda}_p={}_P\inner{\mu}{\lambda}_q=\delta_{\lambda,\mu}$.
As an application, we present generating functions of $gq_\lambda$ (Eq.~\eqref{eq:gq_gen}) and $gp_\lambda$ (Proposition \ref{prop:gen_of_gp}).

\subsection{Related works}

A Pfaffian formula for $GP_\lambda$ and $GQ_\lambda$ was first given in \cite{HUDSON2017115} in the context of the connective $K$-theory of Grassmann bundles.
Nakagawa-Naruse~\cite{nakagawa2016generalized,nakagawa2018universalfactrial,nakagawa2023universal} introduced universal-cohomological generalizations of these functions.
The generating function of $gq_\lambda$ \eqref{eq:gq_gen} was first conjectured in \cite{nakagawa2023universal}.
They also conjectured a combinatorial description of $gq_\lambda$ and $qp_\lambda$ in terms of shifted plane partitions in~\cite{nakagawa2018universalfactrial}, which was later proved by Lewis-Marberg~\cite{lewis2024combinatorial}.
In~\cite{chiu2023expanding}, Chiu-Marberg showed that $\bigoplus_{\lambda}\ZZ[\beta]\cdot GP_\lambda\subsetneq G\Gamma$, the subspace of \textit{finite} linear combinations of $GP_\lambda$, is closed under multiplication.
The same property for $GQ_\lambda$ was proved by Lewis-Marberg~\cite{lewis2024combinatorial}.

Our results also generalize the fermionic description of type $A$ $K$-theoretic functions, including the stable Grothendieck polynomials~\cite{iwao2020freefermion,iwao2022free}, the multi-Schur functions~\cite{iwao2023free}, and the canonical Grothendieck functions~\cite{iwao2024free}.

\subsection*{Acknowledgements}

The author would like to thank Takeshi Ikeda, Hiroshi Naruse, Yang Yi, and Koushik Brahma for their valuable comments on the earlier version.
This work is partially supported by Grant-in-Aid for Scientific Research (C) 19K03065, 22K03239, and 23K03056.

\section{Preliminaries}\label{sec:Pre}

This section gives a brief summary of the neutral fermion Fock space.
We recommend Baker's paper~\cite{baker1995symmetric} and Jimbo-Miwa's paper \cite{jimbo1983solitons} for readers who are interested in this theme.
Let $[A,B]=AB-BA$ be the commutator and $[A,B]_+=AB+BA$ be the anti-commutator.

\subsection{Fock space}\label{sec:Fock_space}

Let $\mathcal{A}$ be the $\QQ(\beta)$-algebra of \textit{neutral fermions} generated by $\{\phi_n\}_{n\in \ZZ}$ satisfying the anti-commutation relation
$
[\phi_m,\phi_n]_+=2(-1)^m\delta_{m+n,0}
$.
In particular, we have $\phi_0^2=1$ and $\phi_n^2=0$ for $n\neq 0$.

Let $\ket{0}$ and $\bra{0}$ denote the \textit{vacuum vectors}: 
\[
\phi_{-n}\ket{0}=0,\quad 
\bra{0}\phi_n=0,\qquad (n>0).
\]
The \textit{Fock space} $\mathcal{F}:=\mathcal{A}\cdot \ket{0}$ is the left $\mathcal{A}$-module generated by $\ket{0}$, and the \textit{dual Fock space}
$\mathcal{F}^\ast:=\bra{0}\cdot \mathcal{A}$ is the right $\mathcal{A}$-module generated by $\bra{0}$.
The \textit{vacuum expectation value} is the unique bilinear form
\[
\mathcal{F}^\ast \otimes_{\QQ(\beta)} \mathcal{F} \to \QQ(\beta);\qquad
\bra{u}\otimes \ket{v}\mapsto \inner{u}{v}
\]
that satisfies (i) $\inner{0}{0} =1$, (ii) $\bra{0}\phi_0\ket{0}=0$, and (iii)
$(\bra{u}\phi_n)\ket{v}=\bra{u}(\phi_n\ket{v})$.
We use the abbreviations 
$\bra{u}X\ket{v}:=(\bra{u}X)\ket{v}=\bra{u}(X\ket{v})$ and
$\langle X\rangle:=\bra{0}X\ket{0}$ for any $X\in \mathcal{A}$.

The Fock space $\mathcal{F}$ is split into two subspaces as $\mathcal{F}=\mathcal{F}_{odd}\oplus \mathcal{F}_{even}$, where $\mathcal{F}_{odd}$ (\textit{resp.}~$\mathcal{F}_{even}$) is the subspace generated by all vectors obtained from $\ket{0}$ by applying odd (\textit{resp.}~even) numbers of $\phi_n$ ($n\geq 0$).
The dual Fock space is also split as $\mathcal{F}^\ast=\mathcal{F}^\ast_{odd}\oplus \mathcal{F}^\ast_{even}$.
If
$\bra{u}\otimes \ket{v}$ is in $ \mathcal{F}^\ast_{even}\otimes \mathcal{F}_{odd}$ or in $\mathcal{F}^\ast_{odd}\otimes \mathcal{F}_{even}$, the vacuum expectation value $\inner{v}{u}$ annihilates automatically.
By restriction, it induces the nondegenerate bilinear form
\begin{equation}\label{eq:even_bilin}
\mathcal{F}_{even}^\ast\otimes_{\QQ(\beta)} \mathcal{F}_{even}\to \QQ(\beta).
\end{equation}

There exists an anti-algebra automorphism $\ast:\mathcal{A}\leftrightarrow\mathcal{A}$; $x\leftrightarrow x^\ast$ defined on the generators by $\phi_n^\ast=(-1)^n\phi_{-n}$ and $(xy)^\ast=y^\ast x^\ast$.
This anti-algebra automorphism induces an involution on the Fock space $\ast:\mathcal{F}\leftrightarrow \mathcal{F}^\ast;v\leftrightarrow v^\ast$ satisfying $\ket{0}^\ast=\bra{0}$.

\subsection{Wick's theorem}

For a $2r\times 2r$ matrix $X$, $\mathrm{Pf}(X)$ denotes the \textit{Pfaffian}
\begin{equation}\label{eq:Pfaffian_def}
\mathrm{Pf}(X)=
\hspace{-1em}
\sum_{\substack{
\sigma(1)<\sigma(3)<\cdots <\sigma(2r-1)\\
\sigma(1)<\sigma(2),\ 
\sigma(3)<\sigma(4),\dots,
\sigma(2r-1)<\sigma(2r)
}}
\hspace{-1em}
\mathrm{sgn}(\sigma)
a_{\sigma(1),\sigma(2)}
a_{\sigma(3),\sigma(4)}\dots a_{\sigma(2r-1),\sigma(2r)}.
\end{equation}
For $n_1,\dots,n_{2r}\in \ZZ$, we have \textit{Wick's theorem} 
\begin{equation}\label{eq:Wick}
\langle
\phi_{n_1}\phi_{n_2}\dots \phi_{n_{2r}}
\rangle
=\mathrm{Pf}
\left(
\langle
\phi_{n_i}\phi_{n_j}
\rangle
\right)_{1\leq i<j\leq 2r}.
\end{equation}

For a strict partition $\lambda=(\lambda_1>\lambda_2>\dots>\lambda_r>0)$ of length $r$, we define the vector $\ket{\lambda}\in \mathcal{F}_{even}$ as
\[
\ket{\lambda}=\begin{cases}
\phi_{\lambda_1}\phi_{\lambda_2}\dots \phi_{\lambda_r}\ket{0} & (r:\mbox{even}),\\
\phi_{\lambda_1}\phi_{\lambda_2}\dots \phi_{\lambda_r}\phi_0\ket{0}  & (r:\mbox{odd}).
\end{cases}
\]
The orthogonality $\langle \mu|\lambda\rangle=2^r\delta_{\lambda,\mu}$ is derived directly from Wick's theorem \eqref{eq:Wick}.

\section{$\beta$-deformed operators and functions}\label{sec:beta_def_op}

In this section, we introduce four families of $\beta$-deformed neutral fermion operators, denoted by $\phi_n^{(\beta)}$, $\phi_n^{[\beta]}$, $\Phi_n^{(\beta)}$, and $\Phi_n^{[\beta]}$.  
The operators $\phi_n^{(\beta)}$ and $\Phi_n^{(\beta)}$ are used to construct the $K$-theoretic functions $GQ_\lambda$ and $GP_\lambda$, while $\phi_n^{[\beta]}$ and $\Phi_n^{[\beta]}$ are used to construct their duals $gq_\lambda$ and $qp_\lambda$.

\subsection{Fermion fields}\label{sec:beta_ferm}

The \textit{neutral fermion field} $\phi(z)$ is the formal series $\phi(z) = \sum_{n \in \ZZ} \phi_n z^n$.  
It defines a $\QQ(\beta)$-linear map $\mathcal{F} \to \mathcal{F}((z))$ by left multiplication, and a $\QQ(\beta)$-linear map $\mathcal{F}^\ast \to \mathcal{F}^\ast((z^{-1}))$ by right multiplication.

\begin{defi}\label{defi:deformed_fermions}
The \textit{$\beta$-deformed fermion fields} 
\[
\phi^{(\beta)}(z)=\sum_{n\in \ZZ}\phi^{(\beta)}_nz^n,\qquad \phi^{[\beta]}(z)=\sum_{n\in \ZZ}\phi^{[\beta]}_nz^n
\]
are formal series defined by the following equations:
\begin{equation}\label{eq:expansions_formal}
\begin{gathered}
\sum_{n=0}^\infty \phi^{(\beta)}_nz^n=
\sum_{n=0}^\infty \phi_n\left(z+\tfrac{\beta}{2}\right)^n,\quad
\sum_{n=1}^\infty \phi^{(\beta)}_{-n}z^{-n}=
\sum_{n=1}^\infty \phi_{-n}\left(\frac{z^{-1}}{1+\frac{\beta}{2}z^{-1}}\right)^n,\\
\sum_{n=1}^\infty \phi^{[\beta]}_nz^n=
\sum_{n=1}^\infty \phi_n\left(\frac{z}{1+\frac{\beta}{2}z}\right)^n,\quad
\sum_{n=0}^\infty \phi^{[\beta]}_{-n}z^{-n}=
\sum_{n=0}^\infty \phi_{-n}\left(z^{-1}+\tfrac{\beta}{2}\right)^n.
\end{gathered}
\end{equation}
The series $\phi^{(\beta)}(z)$ defines a $\QQ(\beta)$-linear map $\mathcal{F}^\ast \to \mathcal{F}^\ast((z^{-1}))$ by right multiplication,  
while $\phi^{[\beta]}(z)$ defines a $\QQ(\beta)$-linear map $\mathcal{F} \to \mathcal{F}((z))$ by left multiplication.
\end{defi}

\begin{rem}
The equations in \eqref{eq:expansions_formal} can be interpreted via the following informal expressions:
\begin{equation}\label{eq:def_of_b_deformed_fermion}
\phi^{(\beta)}(z)=\phi\left(z+\tfrac{\beta}{2}\right),\qquad
\phi^{[\beta]}(z)=\phi\left(\tfrac{z}{1+\frac{\beta}{2}z}\right).
\end{equation}
\end{rem}

If $n>0$, then $\phi^{(\beta)}_n$ and $\phi^{[\beta]}_n$ are linear combinations of $\phi_1,\phi_2,\dots$, while $\phi^{(\beta)}_{-n}$ and $\phi^{[\beta]}_{-n}$ are linear combinations of $\phi_{-1},\phi_{-2},\dots$
Hence, they satisfy the following annihilation rule:
\begin{equation}\label{eq:ann_rule}
\bra{0}\phi_n^{(\beta)}=\bra{0}\phi_n^{[\beta]}=0,\qquad
\phi_{-n}^{(\beta)}\ket{0}=\phi_{-n}^{[\beta]}\ket{0}=0,\qquad (n>0).
\end{equation}

The following lemmas follow immediately from the definitions of $\beta$-deformed operators.
\begin{lemma}
The actions of $\phi_0^{(\beta)}$ and $\phi_0^{[\beta]}$ on the vacuum vectors are expressed as
\begin{equation}\label{eq:vs_phi_0}
\bra{0}\phi_0^{(\beta)}=\bra{0}\phi_0,\quad
\bra{0}\phi_0\phi_0^{(\beta)}=\bra{0},\quad
\phi_0^{[\beta]}\ket{0}=\phi_0\ket{0},\quad
\phi_0^{[\beta]}\phi_0\ket{0}=\ket{0}.
\end{equation}
\end{lemma}

\begin{lemma}[{\cite[\S 9.1]{iwao2021neutralfermionic}}]
\label{lemma:basic_anti_commutation}
We have the anti-commutation relation:
\[
[(\phi^{(\beta)}_m)^\ast, \phi^{[\beta]}_n]_+
=
\begin{cases}
2 & (m=n),\\
\beta & (m=n-1),\\
0 & (\mbox{otherwise}).
\end{cases}
\]
\end{lemma}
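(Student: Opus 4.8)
The plan is to reduce the anti-commutator to an honest bilinear pairing of the expansion coefficients of the two $\beta$-deformed fields, and then evaluate that pairing by a formal-delta computation. First I would expand each deformed operator in the undeformed neutral fermions: writing $\phi^{(\beta)}_m=\sum_a c_{m,a}\phi_a$ and $\phi^{[\beta]}_n=\sum_a d_{n,a}\phi_a$, Definition~\ref{defi:deformed_fermions} identifies the coefficients as $c_{m,a}=[z^m](z+\tfrac{\beta}{2})^a$ and $d_{n,a}=[w^n]\bigl(\tfrac{w}{1+\frac{\beta}{2}w}\bigr)^a$ (here $[z^m]$ denotes "coefficient of $z^m$"), where the positive- and negative-index parts are expanded in the two regions dictated by the definition. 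Applying the conjugation $\phi_a^\ast=(-1)^a\phi_{-a}$ termwise and using the defining relation $[\phi_p,\phi_q]_+=2(-1)^p\delta_{p+q,0}$, the double sum collapses on $q=-p$ and the signs cancel, giving the clean reduction
\[
[(\phi^{(\beta)}_m)^\ast,\phi^{[\beta]}_n]_+=2\sum_a c_{m,a}d_{n,a}.
\]
For each fixed $m,n$ only finitely many $a$ contribute, so no convergence issue arises; the only point to check is that $\ast$ may be applied term-by-term to the (a priori infinite) expansion of $\phi^{(\beta)}_m$, which is legitimate in view of the annihilation rules \eqref{eq:ann_rule}.

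Next I would package the right-hand side into a generating series. Since $c_{m,a}$ and $d_{n,a}$ are coefficient extractions, $\sum_{m,n}\bigl(\sum_a c_{m,a}d_{n,a}\bigr)z^m w^n=\sum_a T^a$, where $T=(z+\tfrac{\beta}{2})\tfrac{w}{1+\frac{\beta}{2}w}$ and the sum over $a$ is split into $a\ge 0$ and $a<0$ according to the two expansion regions. The core computation is to evaluate this formal distribution. Summing the geometric series gives $\sum_{a\ge 0}T^a=\tfrac{1+\frac{\beta}{2}w}{1-zw}$ in the nonnegative-power region and $\sum_{a<0}T^a=-\tfrac{1+\frac{\beta}{2}w}{1-zw}$ in the opposite region; the difference of the two expansions of $\tfrac{1}{1-zw}$ is exactly the formal delta $\Delta:=\sum_{j\in\ZZ}(zw)^j$ supported at $zw=1$, so that $\sum_a T^a=(1+\tfrac{\beta}{2}w)\Delta$.

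Finally I would simplify using the delta identity $w\Delta=z^{-1}\Delta$ (immediate from reindexing $\sum_j w(zw)^j$), which turns $(1+\tfrac{\beta}{2}w)\Delta$ into $(1+\tfrac{\beta}{2}z^{-1})\Delta=\sum_j(zw)^j+\tfrac{\beta}{2}\sum_j z^{j-1}w^j$. Reading off the coefficient of $z^m w^n$ yields $\delta_{m,n}+\tfrac{\beta}{2}\delta_{m,n-1}$, and multiplying by $2$ gives precisely $2\delta_{m,n}+\beta\delta_{m,n-1}$, which is the asserted case distinction.

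I expect the main obstacle to be the bookkeeping in the formal-delta step: one must keep careful track of the two expansion regions forced by Definition~\ref{defi:deformed_fermions}, so that the resummation of $\sum_a T^a$ is valid and the cancellation of the rational parts genuinely produces the delta $\Delta$ rather than $0$. Once that is set up correctly the remaining algebra is routine, and as a sanity check one can confirm the representative cases $(m,n)=(0,0),(1,1),(0,1),(1,0)$ directly from the explicit coefficients $c_{m,a},d_{n,a}$.
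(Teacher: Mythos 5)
Your proof is correct and takes essentially the approach the paper indicates: the paper's own proof merely states that the lemma follows directly from $[(\phi_m)^\ast,\phi_n]_+=2\delta_{m,n}$ and Definition~\ref{defi:deformed_fermions}, deferring all details to \cite[\S 9.1]{iwao2021neutralfermionic}, and your computation (reducing the anti-commutator to $2\sum_a c_{m,a}d_{n,a}$ and evaluating $\sum_a T^a=(1+\tfrac{\beta}{2}w)\sum_{j\in\ZZ}(zw)^j$) carries out exactly that derivation, with the two expansion regions handled correctly. Extracting the coefficient of $z^mw^n$ indeed yields $2\delta_{m,n}+\beta\delta_{m,n-1}$, which is the asserted case distinction.
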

\begin{proof}
This lemma is derived directly from $[(\phi_m)^\ast,\phi_n]_+=2\delta_{m,n}$ and Definition \ref{defi:deformed_fermions}.
For details of the proof, see \cite[\S 9.1]{iwao2021neutralfermionic}.
\end{proof}


We also define more two $\beta$-deformed fermion fields as follows:
\begin{defi}\label{def:Phi}
We define the $\beta$-deformed fermion fields
\[
\Phi^{(\beta)}(z)=\sum_{n\in \ZZ}\Phi_n^{(\beta)}z^n,
\quad
\Phi^{[\beta]}(z)=\sum_{n\in \ZZ}\Phi_n^{[\beta]}z^n
\]
by
\[
\Phi^{(\beta)}(z):=\frac{1}{2+\beta z^{-1}}\phi^{(\beta)}(z),\qquad
\Phi^{[\beta]}(z):=\frac{1}{2+\beta z}\phi^{[\beta]}(z).
\]
The series $\Phi^{(\beta)}(z)$ defines a $\QQ(\beta)$-linear map $\mathcal{F}^\ast \to \mathcal{F}^\ast((z^{-1}))$ by right multiplication,  
while $\Phi^{[\beta]}(z)$ defines a $\QQ(\beta)$-linear map $\mathcal{F} \to \mathcal{F}((z))$ by left multiplication.
%
%
\end{defi}

For $n\in \ZZ$, the operators $\Phi_n^{(\beta)}$ and $\Phi_n^{[\beta]}$ can be expanded as
\begin{equation}\label{eq:Phi(beta)exp}
\Phi_n^{(\beta)}=
\frac{1}{2}\sum_{i=0}^\infty \left(-\frac{\beta}{2}\right)^i\phi^{(\beta)}_{n+i},\qquad
\Phi_n^{[\beta]}=
\frac{1}{2}\sum_{i=0}^\infty \left(-\frac{\beta}{2}\right)^i\phi^{[\beta]}_{n-i}.
\end{equation}
From \eqref{eq:ann_rule} and \eqref{eq:Phi(beta)exp}, we have the following annihilation rules
\begin{equation}\label{eq:ann_Phi}
\bra{0}\Phi_n^{(\beta)}=0,\qquad
\Phi_{-n}^{[\beta]}\ket{0}=0,\qquad(n>0).
\end{equation}
and
\begin{equation}\label{eq:ann_Phi_0}
\bra{0}\Phi_0^{(\beta)}=\frac{1}{2}\cdot \bra{0}\phi_0,\qquad
\Phi_{0}^{[\beta]}\ket{0}=\frac{1}{2}\cdot \phi_0\ket{0}.
\end{equation}

However, the vectors $\Phi_{-n}^{(\beta)}\ket{0}$ and $\bra{0}\Phi_{n}^{[\beta]}$ do \textit{not} vanish for $n>0$.
In fact, using the identity
\[
\phi_0
=
\sum_{i=0}^\infty \left(-\frac{\beta}{2}\right)^i\phi^{(\beta)}_{i}
=
\sum_{i=0}^\infty \left(-\frac{\beta}{2}\right)^i\phi^{[\beta]}_{-i},
%
\]
we can rewrite (\ref{eq:Phi(beta)exp}) as
\begin{equation}\label{eq:Phi_expand}
\Phi_{-n}^{(\beta)}
=
\frac{1}{2}\sum_{i=0}^{n-1}\left(-\frac{\beta}{2}\right)^i\phi^{(\beta)}_{-n+i}+\frac{(-\beta)^n}{2^{n+1}}\phi_0,\qquad 
\Phi_{n}^{[\beta]}
=
\frac{1}{2}\sum_{i=0}^{n-1}\left(-\frac{\beta}{2}\right)^i\phi^{[\beta]}_{n-i}+\frac{(-\beta)^n}{2^{n+1}}\phi_0
\end{equation}
for non-negative $n\geq 0$.
From these expressions, we conclude the following equations for $n\geq 0$:
\begin{equation}\label{eq:lack_of_ann_rule}
\Phi_{-n}^{(\beta)}\ket{0}=
\frac{(-\beta)^n}{2^{n+1}}\cdot \phi_0\ket{0},\qquad
\bra{0}\Phi_n^{[\beta]}=\frac{(-\beta)^n}{2^{n+1}}\cdot \bra{0}\phi_0.
\end{equation}

\begin{lemma}[Duality relation]
\label{lemma:duality}
We have the following commutation relations:
\[
[(\Phi^{(\beta)}_m)^\ast, \phi^{[\beta]}_n]_+
=[(\Phi^{[\beta]}_m)^\ast, \phi^{(\beta)}_n]_+
=\delta_{m,n}.
\]
\end{lemma}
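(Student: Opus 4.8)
The plan is to reduce both identities to the basic anti-commutation relation of Lemma \ref{lemma:basic_anti_commutation} by substituting the explicit series expansions (\ref{eq:Phi(beta)exp}) and (\ref{eq:Phi[beta]exp}) and watching the two surviving terms cancel. Concretely, (\ref{eq:Phi(beta)exp}) reads $\Phi^{(\beta)}_m=\sum_{k\geq 0}\frac{(-\beta)^k}{2^{k+1}}\phi^{(\beta)}_{m+k}$, which is valid for every $m\in\ZZ$. Since $\ast$ is $\QQ(\beta)$-linear, I would apply it termwise to get $(\Phi^{(\beta)}_m)^\ast=\sum_{k\geq 0}\frac{(-\beta)^k}{2^{k+1}}(\phi^{(\beta)}_{m+k})^\ast$, and then, using bilinearity of the anti-commutator, write
\[
[(\Phi^{(\beta)}_m)^\ast,\phi^{[\beta]}_n]_+=\sum_{k\geq 0}\frac{(-\beta)^k}{2^{k+1}}\,[(\phi^{(\beta)}_{m+k})^\ast,\phi^{[\beta]}_n]_+.
\]
Each summand is a scalar, so there is no genuine convergence issue: Lemma \ref{lemma:basic_anti_commutation} forces all but at most two of them to vanish.

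By Lemma \ref{lemma:basic_anti_commutation} the nonzero contributions come from $m+k=n$ (value $2$) and $m+k=n-1$ (value $\beta$), i.e.\ from $k=n-m$ and $k=n-1-m$, each admissible only when $k\geq 0$. The $k=n-m$ term equals $\frac{(-\beta)^{n-m}}{2^{n-m}}$, while the $k=n-1-m$ term equals $\beta\cdot\frac{(-\beta)^{n-1-m}}{2^{n-m}}=-\frac{(-\beta)^{n-m}}{2^{n-m}}$, so whenever both indices are admissible (that is, $n>m$) they cancel exactly. If $n=m$ only the first term survives ($k=0$), giving $1$; if $n<m$ neither index is $\geq 0$, giving $0$. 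This yields $[(\Phi^{(\beta)}_m)^\ast,\phi^{[\beta]}_n]_+=\delta_{m,n}$, the first identity.

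For the second identity I would exploit the $(\ast,\beta\to-\beta)$ symmetry rather than repeat the computation. First I would record the mirror relation $\Phi^{[\beta]}_n=(-1)^n(\Phi^{(-\beta)}_{-n})^\ast$, which follows in one line from $\phi^{[\beta]}_n=(-1)^n(\phi^{(-\beta)}_{-n})^\ast$ (Figure \ref{fig:beta-objects}) together with the expansions (\ref{eq:Phi(beta)exp}) and (\ref{eq:Phi[beta]exp}). Taking $\ast$ gives $(\Phi^{[\beta]}_m)^\ast=(-1)^m\Phi^{(-\beta)}_{-m}$ and, likewise, $\phi^{(\beta)}_n=(-1)^n(\phi^{[-\beta]}_{-n})^\ast$; substituting these, using that the anti-commutator is symmetric, that $[A^\ast,B]_+=([B^\ast,A]_+)^\ast$, and that $\ast$ fixes scalars, reduces $[(\Phi^{[\beta]}_m)^\ast,\phi^{(\beta)}_n]_+$ to $(-1)^{m+n}$ times the already-proved first identity with $\beta\to-\beta$ and indices negated, namely $(-1)^{m+n}\delta_{-m,-n}=\delta_{m,n}$. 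Alternatively one could rerun the telescoping verbatim after first establishing the transpose of Lemma \ref{lemma:basic_anti_commutation}, $[(\phi^{[\beta]}_m)^\ast,\phi^{(\beta)}_n]_+=2\delta_{m,n}+\beta\delta_{m,n+1}$, via the same $\ast$-symmetry.

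The computation is short, so there is no deep obstacle; the delicate points are purely bookkeeping. I expect the two places of potential error to be (i) checking that the admissibility condition $k\geq 0$ is exactly what upgrades the otherwise unconditional cancellation into the diagonal Kronecker delta, and (ii) getting the index shift and the sign $(-1)^{m+n}$ right in the mirror relation used for the second identity. Everything else is forced by Lemma \ref{lemma:basic_anti_commutation} and the $\QQ(\beta)$-linearity of $\ast$.
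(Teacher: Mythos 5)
Your proposal is correct and follows exactly the route the paper indicates: substitute the expansions \eqref{eq:Phi(beta)exp}--\eqref{eq:Phi[beta]exp} into the anti-commutator and let Lemma \ref{lemma:basic_anti_commutation} produce the two terms $\frac{(-\beta)^{n-m}}{2^{n-m}}$ and $-\frac{(-\beta)^{n-m}}{2^{n-m}}$, which cancel except on the diagonal where only $k=0$ is admissible. The paper leaves this as ``straightforward,'' and your telescoping computation (together with either the $\ast$-transposed relation or the mirror identity $\Phi^{[\beta]}_n=(-1)^n(\Phi^{(-\beta)}_{-n})^\ast$ for the second equality) is a correct filling-in of precisely that argument.
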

\begin{proof}
This lemma follows directly from Lemma \ref{lemma:basic_anti_commutation} and (\ref{eq:Phi(beta)exp}).
\end{proof}

\subsection{Current and Hamiltonian operators}

For any odd integer $m$, the \textit{current operator} $b_m$ is defined as the formal sum
\[
b_m=\frac{1}{4}\sum_{i\in \ZZ}(-1)^i\phi_{-i-m}\phi_i.
\]
\begin{defi}
For any integer $m\neq 0$, we define the \textit{$\beta$-deformed current operators} $b_m^{(\beta)}$ and $b_m^{[\beta]}$ by
\[
b_m^{(\beta)}=
\left.\frac{(X-\frac{\beta}{2})^m-(-X-\frac{\beta}{2})^m }{2}\right|_{X^k\mapsto b_k}\mbox{\quad and \quad}
b_m^{[\beta]}=(b^{(\beta)}_{-m})^\ast.
\]
Here, the expression $\frac{(X-\frac{\beta}{2})^m-(-X-\frac{\beta}{2})^m }{2}$ is understood as a polynomial in $X$ when $m>0$, and as a power series in $X^{-1}$ when $m<0$.
\end{defi}
\begin{example}
For example, we have 
\[
b_1^{(\beta)}=b_1, \quad
b_2^{(\beta)}=-\beta b_1,\quad 
b_3^{(\beta)}=b_3+\frac{3\beta^2}{4}b_1,\quad b_{-1}^{(\beta)}=b_{-1}+\frac{\beta^2}{4}b_{-3}+\frac{\beta^4}{16}b_{-5}+\cdots.
\]
In particular, we have $\lim\limits_{\beta\to 0}b_n^{(\beta)}=0$ when $n$ is even.
\end{example}

Let $p_n(x)=x_1^n+x_2^n+\cdots$ denote the $n$-th power sum.
The \textit{Hamiltonian} $\mathcal{H}(x)$ is the operator defined by
\[
\mathcal{H}(x)=2\sum_{n=1,3,5,\dots} \frac{p_n(x)}{n}b_n.
\]
\begin{defi}
We define the \textit{$\beta$-deformed Hamiltonian operators} $\mathcal{H}^{(\beta)}(x)$ and  $\mathcal{H}^{[\beta]}(x)$ by 
\[
\mathcal{H}^{(\beta)}(x)=2\sum_{n=1}^\infty \frac{p_n(x)}{n}b^{(\beta)}_n,\qquad
\mathcal{H}^{[\beta]}(x)=2\sum_{n=1}^\infty \frac{p_n(x)}{n}b^{[\beta]}_n.
\]
For brevity, we often write 
$\mathcal{H}^{(\beta)}=\mathcal{H}^{(\beta)}(x)$ and
$\mathcal{H}^{[\beta]}=\mathcal{H}^{[\beta]}(x)$.
\end{defi}

\subsection{Bilinear form}\label{sec:beta_def_powersum}

We introduce two families of \textit{$\beta$-deformed power sums}
$p^{(\beta)}_n(x)$ and $p^{[\beta]}_n(x)$ defined as
\[
p^{(\beta)}_n(x):=
\sum_{i=0}^\infty \tbinom{-n}{i}(\tfrac{\beta}{2})^ip_{n+i}(x),\qquad
p^{[\beta]}_n(x):=\sum_{i=1}^n\tbinom{n}{i}(\tfrac{\beta}{2})^ip_i(x).
\]
These equations can be interpreted via the following informal expressions:
\begin{equation}\label{eq:beta_deformed_informal}
p^{(\beta)}_n(x)=p_n(\tfrac{x}{1+\frac{\beta}{2}x}),\qquad
p^{[\beta]}_n(x)=p_n(x+\tfrac{\beta}{2})-p_n(\tfrac{\beta}{2}).
\end{equation}
For any partition $\lambda=(\lambda_1\geq \lambda_2\geq\dots )$, we put
\[
p^{(\beta)}_{\lambda}:=
p^{(\beta)}_{\lambda_1}p^{(\beta)}_{\lambda_2}\cdots,\qquad p^{[\beta]}_{\lambda}:=
p^{[\beta]}_{\lambda_1}p^{[\beta]}_{\lambda_2}\cdots
.
\]

Let  
\[
\widehat{G\Gamma}=\QQ(\beta)[[p_1^{(\beta)},p_3^{(\beta)},p_5^{(\beta)},\dots]]
\] 
be the $\QQ(\beta)$-algebra consisting of all infinite linear combinations of $p_\lambda^{(\beta)}$ with odd partitions $\lambda$, and let 
\[
g\Gamma=\QQ(\beta)[p_1^{[\beta]},p_3^{[\beta]},p_5^{[\beta]},\dots]
\] 
be the $\QQ(\beta)$-algebra consisting of all finite linear combinations of $p_\lambda^{[\beta]}$ with odd partitions $\lambda$.
Then, there exist two natural isomorphisms:
\[
\iota^{(\beta)}:\widehat{\Gamma}\to \widehat{G\Gamma};\quad
p_n\mapsto p_n^{(\beta)},\qquad
\iota^{[\beta]}:\Gamma\to g\Gamma;\quad
p_n\mapsto p_n^{[\beta]},
\]
where $\Gamma=\QQ(\beta)[p_1,p_3,\dots]$ and 
$\widehat{\Gamma}=\QQ(\beta)[[p_1,p_3,\dots]]$.
From \eqref{eq:beta_deformed_informal}, we see that $\iota^{(\beta)}$ coincides with the substitution map $x_i\mapsto \frac{x_i}{1+\frac{\beta}{2}x_i}$.

Let $Q_\lambda\in \Gamma$ be the \textit{Schur $Q$-function}~\cite[\S III.8]{macdonald1998symmetric} for a strict partition $\lambda$. 
\begin{prop}
We have
\begin{equation}\label{eq:beta_deformed_Q}
\iota^{(\beta)}(Q_\lambda)=
\bra{0}e^{\mathcal{H}^{(\beta)}}\ket{\lambda},\qquad
\iota^{[\beta]} (Q_\lambda)=\bra{0}e^{\mathcal{H}^{[\beta]}}\ket{\lambda}.
\end{equation}
\end{prop}
\begin{proof}
These equations can be derived from $Q_\lambda=\bra{0}e^{\mathcal{H}}\ket{\lambda}$ (see, for example \cite[\S 3]{baker1995symmetric}) together with the following equations:
\begin{align}
\mathcal{H}^{(\beta)}=2\sum_{n=1,3,5,\dots}\frac{p^{(\beta)}_n}{n}b_n,\qquad
\mathcal{H}^{[\beta]}=2\sum_{n=1,3,5,\dots}\frac{p^{[\beta]}_n}{n}b_n,
\end{align}
which are shown in \cite[Lemma 7]{iwao2021neutralfermionic}.
\end{proof}

There uniquely exists a bilinear form
\begin{equation}\label{eq:def_of_bilinear_form}
\widehat{G\Gamma}\otimes_{\QQ(\beta)}g\Gamma\to \QQ(\beta);\qquad
f\otimes g\mapsto 
\left\langle
f,g
\right\rangle
\end{equation} 
satisfying $
\langle
p_\lambda^{(\beta)}, p_\mu^{[\beta]}
\rangle=
2^{-\ell(\lambda)} z_\lambda\delta_{\lambda,\mu}$ for all odd partitions $\lambda,\mu$.
Through the bilinear form \eqref{eq:def_of_bilinear_form}, $\widehat{G\Gamma}$ can be identified with the linear space $\mathrm{Hom}_{\QQ(\beta)}(g\Gamma,\QQ(\beta))$.

\begin{lemma}[{\cite[\S 8]{iwao2021neutralfermionic}}]\label{lemma:Cauchy}
We have the following $K$-theoretic Cauchy identity
\[
\sum_{\lambda:\mathrm{odd}}2^{\ell(\lambda)}z_\lambda^{-1}
p_\lambda^{(\beta)}(x)p_\mu^{[\beta]}(y)=
\prod_{i,j}\frac{1-\overline{x_i}y_j}{1-x_iy_j},\qquad
\mbox{where}\quad \overline{x}=-\frac{x}{1+\beta x}.
\]
\end{lemma}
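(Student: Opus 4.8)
The plan is to reduce the identity to its exponential (``logarithmic'') form and then to a single-variable computation. Writing $m_i=m_i(\lambda)$ for the multiplicity of $i$ in an odd partition $\lambda$, so that $\ell(\lambda)=\sum_{i\,\text{odd}}m_i$, $z_\lambda=\prod_{i\,\text{odd}}i^{m_i}m_i!$, and $p_\lambda^{(\beta)}(x)p_\lambda^{[\beta]}(y)=\prod_{i\,\text{odd}}\bigl(p_i^{(\beta)}(x)p_i^{[\beta]}(y)\bigr)^{m_i}$, the left-hand side factors as an Euler product over odd $i$:
\[
\sum_{\lambda:\text{odd}}\frac{2^{\ell(\lambda)}}{z_\lambda}p_\lambda^{(\beta)}(x)p_\lambda^{[\beta]}(y)
=\prod_{i\,\text{odd}}\sum_{m\geq0}\frac{1}{m!}\left(\frac{2\,p_i^{(\beta)}(x)p_i^{[\beta]}(y)}{i}\right)^{m}
=\exp\!\left(2\sum_{n\,\text{odd}}\frac{p_n^{(\beta)}(x)p_n^{[\beta]}(y)}{n}\right).
\]
This is a purely formal manipulation valid for any choice of deformed power sums, so the lemma is equivalent to the additive identity
\[
2\sum_{n\,\text{odd}}\frac{p_n^{(\beta)}(x)p_n^{[\beta]}(y)}{n}
=\sum_{i,j}\log\frac{1-\overline{x_i}y_j}{1-x_iy_j}.
\]

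Next I would exploit that both deformed power sums are additive in the variables. From the definitions, $p_n^{(\beta)}(x)=\sum_i\bigl(\tfrac{x_i}{1+\frac{\beta}{2}x_i}\bigr)^n$ and $p_n^{[\beta]}(y)=\sum_j\bigl[(y_j+\tfrac{\beta}{2})^n-(\tfrac{\beta}{2})^n\bigr]$, so each side above is a sum over pairs $(i,j)$, and it suffices to prove the single-variable statement: for indeterminates $x,y$,
\[
2\sum_{n\,\text{odd}}\frac{1}{n}\left(\frac{x}{1+\frac{\beta}{2}x}\right)^{n}\Bigl[(y+\tfrac{\beta}{2})^n-(\tfrac{\beta}{2})^n\Bigr]
=\log\frac{1-\overline{x}y}{1-xy}.
\]
Using the elementary series $2\sum_{n\,\text{odd}}u^n/n=\log\frac{1+u}{1-u}$, the left-hand side splits as $\log\frac{1+A}{1-A}-\log\frac{1+B}{1-B}$ with $A=\frac{x(y+\beta/2)}{1+\beta x/2}$ and $B=\frac{\beta x/2}{1+\beta x/2}$.

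It then remains to verify the rational-function identity $\frac{(1+A)(1-B)}{(1-A)(1+B)}=\frac{1-\overline{x}y}{1-xy}$. A direct computation gives $1+A=\frac{1+\beta x+xy}{1+\beta x/2}$, $1-A=\frac{1-xy}{1+\beta x/2}$, $1+B=\frac{1+\beta x}{1+\beta x/2}$, and $1-B=\frac{1}{1+\beta x/2}$, whence the left side equals $\frac{1+\beta x+xy}{(1-xy)(1+\beta x)}$; on the other hand $1-\overline{x}y=1+\frac{xy}{1+\beta x}=\frac{1+\beta x+xy}{1+\beta x}$, so the right side equals the same expression. I expect the only genuinely delicate point to be bookkeeping rather than ideas: one must check that the constant subtraction $-(\tfrac{\beta}{2})^n$ in $p_n^{[\beta]}$ distributes correctly variable by variable (this is exactly what makes $p_n^{[\beta]}$ additive, so that the reduction to a single pair $(x,y)$ is legitimate), and that all manipulations are read as identities of formal power series in the appropriate completion, so that rearranging the odd-$n$ sums and expanding the logarithms is justified.
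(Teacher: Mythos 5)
Your proof is correct and is essentially the paper's argument with the details filled in: the paper obtains the identity by substituting $x_i\mapsto x_i/(1+\tfrac{\beta}{2}x_i)$ and $p_n(y)\mapsto p_n(y+\tfrac{\beta}{2})-p_n(\tfrac{\beta}{2})$ into the classical odd-power-sum Cauchy identity $\sum_{\lambda:\mathrm{odd}}2^{\ell(\lambda)}z_\lambda^{-1}p_\lambda(x)p_\lambda(y)=\prod_{i,j}\frac{1+x_iy_j}{1-x_iy_j}$, and tracking that substitution on the product side is exactly your exponential-form reduction together with the single-pair rational-function check $\frac{(1+A)(1-B)}{(1-A)(1+B)}=\frac{1-\overline{x}y}{1-xy}$. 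No gaps.
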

\begin{proof}
This lemma follows from the (ordinary) Cauchy kernel
\[
\sum_{\lambda:\mathrm{odd}}2^{\ell(\lambda)}z_\lambda^{-1}
p_\lambda(x)p_\mu(y)=
\prod_{i,j}\frac{1+x_iy_j}{1-x_iy_j}
\]
by applying the substitutions $x_i\mapsto \frac{x_i}{1+\frac{\beta}{2}x_i}$ and $p_n(y)\mapsto p_n(y+\frac{\beta}{2})-p_n(\frac{\beta}{2})$.
For details, see \cite[\S 8]{iwao2021neutralfermionic}.
\end{proof}

Let
$Q^{(\beta)}_\lambda:=\iota^{(\beta)}(Q_\lambda)$ and 
$Q^{[\beta]}_\lambda:=\iota^{[\beta]}(Q_\lambda)$.

\begin{lemma}\label{lemma:beta_Q_dual}
For any strict partitions $\lambda,\mu$, we have
$
\langle
Q^{(\beta)}_\lambda,
Q^{[\beta]}_\mu
\rangle
=2^{\ell(\lambda)}\delta_{\lambda,\mu}
$.
\end{lemma}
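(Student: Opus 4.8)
The plan is to reduce the statement to the classical orthogonality of Schur $Q$-functions by transporting the bilinear form through the isomorphisms $\iota^{(\beta)}$ and $\iota^{[\beta]}$. First I would record that the form \eqref{eq:def_of_bilinear_form} is, essentially by fiat, the pushforward of the standard scalar product on $\Gamma=\QQ(\beta)[p_1,p_3,\dots]$. Introduce the scalar product $\langle\cdot,\cdot\rangle_0$ on $\Gamma$ characterized by $\langle p_\lambda,p_\mu\rangle_0=2^{-\ell(\lambda)}z_\lambda\delta_{\lambda,\mu}$ for odd partitions $\lambda,\mu$. Since $\iota^{(\beta)}(p_\lambda)=p_\lambda^{(\beta)}$ and $\iota^{[\beta]}(p_\mu)=p_\mu^{[\beta]}$, the defining relation $\langle p_\lambda^{(\beta)},p_\mu^{[\beta]}\rangle=2^{-\ell(\lambda)}z_\lambda\delta_{\lambda,\mu}$ says exactly that $\langle\iota^{(\beta)}(f),\iota^{[\beta]}(g)\rangle=\langle f,g\rangle_0$ on power-sum basis elements, and hence, by bilinearity, on all of $\Gamma$.

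Next I would expand $Q_\lambda$ in the power-sum basis, $Q_\lambda=\sum_\rho c_{\lambda,\rho}\,p_\rho$ (a \emph{finite} sum, with $\rho$ ranging over odd partitions), so that $Q^{(\beta)}_\lambda=\iota^{(\beta)}(Q_\lambda)=\sum_\rho c_{\lambda,\rho}\,p^{(\beta)}_\rho$ and $Q^{[\beta]}_\mu=\iota^{[\beta]}(Q_\mu)=\sum_\rho c_{\mu,\rho}\,p^{[\beta]}_\rho$ with the \emph{same} coefficients $c_{\lambda,\rho}$. Bilinearity together with the diagonal form of \eqref{eq:def_of_bilinear_form} then collapses the double sum:
\[
\langle Q^{(\beta)}_\lambda,Q^{[\beta]}_\mu\rangle
=\sum_\rho c_{\lambda,\rho}c_{\mu,\rho}\,2^{-\ell(\rho)}z_\rho
=\langle Q_\lambda,Q_\mu\rangle_0.
\]
Because each $Q_\lambda$ is a finite polynomial in the $p_n$, this is a finite sum, so there is no convergence issue even though $Q^{(\beta)}_\lambda$ is an \emph{infinite} combination of the undeformed $p_n$; it is a finite combination of the $p^{(\beta)}_n$, hence genuinely lies in $\widehat{G\Gamma}$, and dually $Q^{[\beta]}_\mu\in g\Gamma$.

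Finally I would invoke the classical identity $\langle Q_\lambda,Q_\mu\rangle_0=2^{\ell(\lambda)}\delta_{\lambda,\mu}$ (Macdonald, \S III.8): under the normalization $\langle p_\lambda,p_\mu\rangle_0=2^{-\ell(\lambda)}z_\lambda\delta_{\lambda,\mu}$, the families $\{Q_\lambda\}$ and $\{2^{-\ell(\lambda)}Q_\lambda\}$ are dual bases, which is equivalent to the Cauchy kernel $\sum_\lambda 2^{-\ell(\lambda)}Q_\lambda(x)Q_\lambda(y)=\prod_{i,j}\frac{1+x_iy_j}{1-x_iy_j}$ that is precisely the $\beta=0$ specialization underlying Lemma~\ref{lemma:Cauchy}. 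Combining with the previous display yields $\langle Q^{(\beta)}_\lambda,Q^{[\beta]}_\mu\rangle=2^{\ell(\lambda)}\delta_{\lambda,\mu}$, as claimed. I do not anticipate a serious obstacle here; the only points demanding care are keeping the two isomorphisms $\iota^{(\beta)}$ and $\iota^{[\beta]}$ straight so that identical expansion coefficients appear on both arguments, and checking that the classical normalization of the $Q$-scalar product matches the factor $2^{-\ell(\lambda)}z_\lambda$ fixed in \eqref{eq:def_of_bilinear_form}.
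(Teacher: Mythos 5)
Your proposal is correct and follows essentially the same route as the paper: transport the bilinear form back to $\Gamma$ via $\iota^{(\beta)}$ and $\iota^{[\beta]}$, observe that it becomes the Hall inner product normalized by $\langle p_\lambda,p_\mu\rangle=2^{-\ell(\lambda)}z_\lambda\delta_{\lambda,\mu}$, and invoke the classical orthogonality $\langle Q_\lambda,Q_\mu\rangle=2^{\ell(\lambda)}\delta_{\lambda,\mu}$. You merely spell out the power-sum expansion and finiteness check that the paper leaves implicit.
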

\begin{proof}
Let $\langle f,g\rangle':=\langle \iota^{(\beta)}(f),\iota^{[\beta]}(g)\rangle$ for $f,g\in \Gamma$.
Then, the bilinear form $\langle \cdot,\cdot \rangle'$ coincides with the Hall inner product on $\Gamma$~\cite[\S III.8]{macdonald1998symmetric}.
The lemma follows from the orthogonality $\langle Q_\lambda,Q_\mu\rangle'=2^{\ell(\lambda)}\delta_{\lambda,\mu}$.
\end{proof}

\subsection{Boson-fermion correspondence}
\label{sec:boson_fermion}

Let $\Omega_0$ and $\chi$ be the $\QQ(\beta)$-linear maps defined by
\[
\begin{gathered}
\Omega_0:\mathcal{F}_{even}\to \widehat{G\Gamma};\qquad \ket{v}\mapsto \bra{0}e^{\mathcal{H}^{(\beta)}}\ket{v},\\
\chi:\mathcal{F}_{even}\to g\Gamma;\qquad \ket{v}\mapsto \bra{0}e^{\mathcal{H}^{[\beta]}}\ket{v}.
\end{gathered}
\]
By \eqref{eq:beta_deformed_Q} and Lemma \ref{lemma:beta_Q_dual}, we see that these linear maps satisfy the relation
\begin{equation}\label{eq:fundamental_rel_of_Omegas}
\inner{u^\ast}{v}=
\big\langle \Omega_0(\ket{u}), \chi(\ket{v}) \big\rangle
, \qquad\text{where}\ \bra{u^\ast}=(\ket{u})^\ast,
\end{equation}
for all elements $\ket{u},\ket{v}\in \mathcal{F}_{even}$.
Note that $\chi$ is bijective, while $\Omega_0$ is not, because $\widehat{G\Gamma}$ is too large a vector space.

To modify the map $\Omega_0$ into a bijection, we consider an extension of the vector space $\mathcal{F}_{even}$.
Let $\widehat{\mathcal{F}}_{even}:=
\mathrm{Hom}_{\QQ(\beta)}(\mathcal{F}^\ast_{even},\QQ(\beta))
$ be the dual space of $\mathcal{F}^\ast_{even}$.
For any $\varphi\in \widehat{\mathcal{F}}_{even}$,
there exists a unique element $x_\varphi\in \widehat{G\Gamma}$ satisfying the relation
\begin{equation}\label{eq:def_of_Omega_rel}
\big\langle
x_\varphi,\chi(\ket{v})
\big\rangle=
\varphi(\bra{v^\ast})
\end{equation}
for all $\ket{v}\in\mathcal{F}_{even}$, since the bilinear form \eqref{eq:def_of_bilinear_form} is non-degenerate.
This defines a unique linear map
\[
\Omega:\widehat{\mathcal{F}}_{even}\to \widehat{G\Gamma};\quad \varphi\mapsto x_\varphi.
\] 

On the other hand, through the bilinear form \eqref{eq:even_bilin}, $\mathcal{F}_{even}$ can be identified with a subspace of $\widehat{\mathcal{F}}_{even}$.
Comparing \eqref{eq:fundamental_rel_of_Omegas} with \eqref{eq:def_of_Omega_rel}, we find that the restriction of $\Omega$ to $\mathcal{F}_{even}$ coincides with $\Omega_0$.
Using \eqref{eq:fundamental_rel_of_Omegas} again, we obtain the identity
\begin{equation}\label{eq:inner_pres}
\inner{u^\ast}{v}=
\big\langle \Omega(\ket{u}), \chi(\ket{v}) \big\rangle
\end{equation}
for all $\ket{u}\in \widehat{\mathcal{F}}_{even}$ and $\ket{v}\in \mathcal{F}_{even}$.
One can verify that $\Omega$ is bijective.

We refer to the bijections $\Omega$ and $\chi$ as the \textit{$\beta$-deformed boson-fermion correspondences}.

\section{Algebraic relations of $\beta$-deformed operators}\label{sec:comm_rel}

In this section, we present several lemmas describing commutation relations among the $\beta$-deformed fermionic and current operators.
Most of the proofs can be found in the previous paper~\cite{iwao2021neutralfermionic}.

The \textit{$K$-theoretic addition} and \textit{subtraction} are binary operators $\oplus$ and $\ominus$ defined by
\[
x\oplus y=x+y+\beta xy,\quad
x\ominus y=\frac{x-y}{1+\beta y}.
\]
In particular, we have $\overline{t}=\frac{-t}{1+\beta t}=0\ominus t$.

Let $\Theta$ and $\theta$ be the operators
\[
\Theta=2
\sum_{n=1,3,5,\dots}\left(\frac{\beta}{2}\right)^n\frac{b_{-n}}{n},\qquad
\theta=
\Theta^\ast=
2
\sum_{n=1,3,5,\dots}\left(\frac{\beta}{2}\right)^n\frac{b_{n}}{n}.
\]

\begin{lemma}\label{lemma:comm_rels_0}
We have the following commutation relations:
\begin{enumerate}
\item\label{item:1-1} $\bra{0}e^{\Theta}=\bra{0}$.
\item\label{item:1-2} $e^{-\Theta}\phi^{(\beta)}(z)e^{\Theta}
=\frac{1}{1+\beta z^{-1}}\cdot \phi^{(\beta)}(z)
$.
\item\label{item:1-3} $e^{-\Theta}e^{\mathcal{H}^{(\beta)}}e^{\Theta}=\prod_{i}(1+\beta x_i)\cdot e^{\mathcal{H}^{(\beta)}}$.
\item\label{item:1-4}
$e^{\mathcal{H}^{(\beta)}}
\phi^{(\beta)}(z)e^{-\mathcal{H}^{(\beta)}}=\prod_{i}\frac{z^{-1}\oplus x_i}{z^{-1}-x_i}\cdot \phi^{(\beta)}(z)$.
\item\label{item:1-5}
$
\left\langle
\phi^{(\beta)}(z)\phi^{(\beta)}(w)
\right\rangle
=\frac{w^{-1}-z^{-1}}{w^{-1}\oplus z^{-1}}
$,
where $\frac{w^{-1}-z^{-1}}{w^{-1}\oplus z^{-1}}$ is understood as 
\[
\begin{aligned}
&%
\textstyle\frac{1-wz^{-1}}{1+wz^{-1}+\beta z^{-1}}=
1-(2w+\beta)z^{-1}+(2w^{2}+3\beta w+\beta^2)z^{-2}-\cdots.
\end{aligned}
\]
This is an element of the field $\QQ(\beta)((w^{-1}))((z^{-1}))$ \footnote{The field $\QQ(\beta)((w^{-1}))((z^{-1}))=\left\{\QQ(\beta)((w^{-1}))\right\}((z^{-1}))$ is not the same as $\QQ(\beta)((z^{-1}))((w^{-1}))$.
In fact, the former contains $1+wz^{-1}+w^2z^{-2}+w^{3}z^{-3}+\cdots$, while the latter does not.
}.
\end{enumerate}
\end{lemma}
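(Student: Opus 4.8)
The unifying idea is that all five identities descend from the single relation $[b_m,\phi(w)]=w^m\phi(w)$ for odd $m$ (equivalently $[b_m,\phi_n]=\phi_{n-m}$), which I would first record by a direct computation from $b_m=\tfrac14\sum_i(-1)^i\phi_{-i-m}\phi_i$ and $[\phi_a,\phi_b]_+=2(-1)^a\delta_{a+b,0}$, together with the centrality of $[b_p,b_q]$ for $p+q=0$. Once these are in hand, the conjugation identities follow from the Hadamard formula $e^{A}Be^{-A}=\sum_{k\ge0}\tfrac1{k!}(\operatorname{ad}_A)^kB$: whenever $A$ is a $\QQ(\beta)$-linear combination of currents and $B=\phi^{(\beta)}(z)$, the bracket $[A,\phi^{(\beta)}(z)]$ is a scalar Laurent series times $\phi^{(\beta)}(z)$, so $(\operatorname{ad}_A)^kB$ is that scalar raised to the $k$-th power times $B$ and the whole exponential collapses to a single scalar factor.

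For item \eqref{item:1-1} I would show $\bra0\Theta=0$. Writing $b_{-n}=\tfrac14\sum_i(-1)^i\phi_{n-i}\phi_i$ for $n>0$ and pushing the fermions against $\bra0$, every monomial contains a factor $\phi_j$ with $j>0$ acting on $\bra0$ (after at most one anticommutation, using $n\neq0$), hence vanishes; thus $\bra0 b_{-n}=0$ and $\bra0\Theta=0$. Since $\Theta$ acts on the right, $\bra0\Theta^k=(\bra0\Theta)\Theta^{k-1}=0$ for every $k\ge1$, giving $\bra0 e^{\Theta}=\bra0$.

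For \eqref{item:1-2} and \eqref{item:1-4} I would evaluate the scalar eigenvalue. Using the informal identity $\phi^{(\beta)}(z)=\phi(z+\tfrac\beta2)$ and the master relation yields $[b_k,\phi^{(\beta)}(z)]=(z+\tfrac\beta2)^k\phi^{(\beta)}(z)$. For \eqref{item:1-2} this gives $[\Theta,\phi^{(\beta)}(z)]=2\sum_{m\,\mathrm{odd}>0}\tfrac1m\big(\tfrac{\beta/2}{z+\beta/2}\big)^m\phi^{(\beta)}(z)=\log(1+\beta z^{-1})\,\phi^{(\beta)}(z)$ after summing the arctanh series, so the Hadamard exponential contributes the factor $(1+\beta z^{-1})^{-1}$. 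For \eqref{item:1-4}, the same input with $b_n^{(\beta)}=P_n(b)$, $P_n(X)=\tfrac12\big((X-\tfrac\beta2)^n-(-X-\tfrac\beta2)^n\big)$, gives $[b_n^{(\beta)},\phi^{(\beta)}(z)]=P_n(z+\tfrac\beta2)\phi^{(\beta)}(z)$; I would then sum $2\sum_{n\,\mathrm{odd}>0}\tfrac{p_n(x)}{n}P_n(z+\tfrac\beta2)$ and recognise its exponential as the stated product $\prod_i\frac{z^{-1}\oplus x_i}{z^{-1}-x_i}$. For \eqref{item:1-3}, both $\Theta$ and $\mathcal H^{(\beta)}$ are $\QQ(\beta)$-combinations of currents, so $[\Theta,\mathcal H^{(\beta)}]$ is a sum of central brackets $[b_{-m},b_k]$ and therefore lies in $\QQ(\beta)$; the Hadamard series then truncates, $e^{-\Theta}e^{\mathcal H^{(\beta)}}e^{\Theta}=e^{-[\Theta,\mathcal H^{(\beta)}]}e^{\mathcal H^{(\beta)}}$, and I would evaluate the constant $[\Theta,\mathcal H^{(\beta)}]=-\sum_i\log(1+\beta x_i)$ from the explicit current commutator to obtain the factor $\prod_i(1+\beta x_i)$.

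For \eqref{item:1-5} I would use $\langle\phi^{(\beta)}(z)\phi^{(\beta)}(w)\rangle=\langle\phi(z+\tfrac\beta2)\phi(w+\tfrac\beta2)\rangle$ together with the undeformed two-point function $\langle\phi(a)\phi(b)\rangle=\frac{a-b}{a+b}$, which follows from Wick's theorem and $\langle\phi_{-n}\phi_n\rangle=2(-1)^n$; substituting $a=z+\tfrac\beta2$, $b=w+\tfrac\beta2$ gives $\frac{z-w}{z+w+\beta}=\frac{w^{-1}-z^{-1}}{w^{-1}\oplus z^{-1}}$. The main obstacle throughout (and the reason for the footnote) is the bookkeeping of formal expansions rather than the algebra: each operator is defined by different series on its positive and negative modes, the two-point function must be read in $\QQ(\beta)((w^{-1}))((z^{-1}))$ and not in $\QQ(\beta)((z^{-1}))((w^{-1}))$, and the informal substitution $w\mapsto z+\tfrac\beta2$ has to be justified as a genuine ring map on the appropriate Laurent-series completion so that the geometric summations in \eqref{item:1-2}--\eqref{item:1-5} converge to the claimed rational functions. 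Making that substitution rigorous --- rather than the elementary commutator computations --- is where I expect the real work to lie.
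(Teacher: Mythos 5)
Your overall strategy --- deriving everything from the single relation $[b_m,\phi_n]=\phi_{n-m}$ together with the Hadamard formula, and computing the two-point function from $\langle \phi_{-n}\phi_n\rangle=2(-1)^n$ --- is the standard argument and is essentially what underlies the paper's proof, which for items \eqref{item:1-2}, \eqref{item:1-3}, \eqref{item:1-5} simply cites \cite{iwao2021neutralfermionic} and for \eqref{item:1-4} quotes the conjugation formula $e^{\mathcal{H}^{(\beta)}}\phi^{(\beta)}(z)e^{-\mathcal{H}^{(\beta)}}=\exp\bigl(\sum_{n=1}^{\infty}\tfrac{p_n(x)}{n}(z^n-(-z-\beta)^n)\bigr)\phi^{(\beta)}(z)$. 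Your treatments of \eqref{item:1-1}, \eqref{item:1-2} and \eqref{item:1-5} are correct: the arctanh summation in \eqref{item:1-2} does give $\log(1+\beta z^{-1})$ since $\tfrac{1+t}{1-t}=1+\beta z^{-1}$ for $t=\tfrac{\beta/2}{z+\beta/2}$, and $\tfrac{a-b}{a+b}$ at $a=z+\tfrac{\beta}{2}$, $b=w+\tfrac{\beta}{2}$ is indeed $\tfrac{z-w}{z+w+\beta}=\tfrac{w^{-1}-z^{-1}}{w^{-1}\oplus z^{-1}}$; you are also right that the only genuinely delicate point there is the bookkeeping of expansion rings.

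There is, however, a concrete error in your item \eqref{item:1-4}, and it silently infects the unshown computation in \eqref{item:1-3}. The quantity $2\sum_{n\ \mathrm{odd}>0}\tfrac{p_n(x)}{n}P_n(z+\tfrac{\beta}{2})$ does \emph{not} exponentiate to the stated product. For odd $n$ one has $P_n(z+\tfrac{\beta}{2})=\tfrac{z^n+(z+\beta)^n}{2}$, so the exponential of your sum is
\begin{equation*}
\prod_i\left(\frac{(1+x_iz)\,(1+x_i(z+\beta))}{(1-x_iz)\,(1-x_i(z+\beta))}\right)^{1/2},
\qquad\text{whereas}\qquad
\prod_i\frac{z^{-1}\oplus x_i}{z^{-1}-x_i}=\prod_i\frac{1+x_i(z+\beta)}{1-x_iz}.
\end{equation*}
The cleanest way to see the mismatch is that $\log\prod_i\tfrac{z^{-1}\oplus x_i}{z^{-1}-x_i}=\sum_{n\ge1}\tfrac{p_n}{n}(z^n-(-z-\beta)^n)$ has the nonzero $p_2$-coefficient $-\beta(z+\tfrac{\beta}{2})$, which no linear combination of odd power sums can produce. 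The missing idea is that the sum defining $\mathcal{H}^{(\beta)}$ must run over \emph{all} $n\ge 1$, not only odd $n$ --- this is precisely why the paper bothers to define $b^{(\beta)}_m$ for even $m$ (e.g.\ $b_2^{(\beta)}=-\beta b_1$, which still lies in the span of odd currents and vanishes at $\beta=0$), and it matches the $\sum_{n=1}^{\infty}$ appearing in the formula quoted in the paper's proof of \eqref{item:1-4}; the restriction to $n=1,3,5,\dots$ in the displayed definition of $\mathcal{H}^{(\beta)}$ is evidently a slip that you copied without checking. With the full sum your computation gives $[\mathcal{H}^{(\beta)},\phi^{(\beta)}(z)]=\sum_{n\ge1}\tfrac{p_n}{n}(z^n-(-z-\beta)^n)\,\phi^{(\beta)}(z)$ and \eqref{item:1-4} follows; likewise $[\mathcal{H}^{(\beta)},\Theta]=\sum_i\log(1+\beta x_i)$ holds only when the even-indexed terms are included (the odd-only sum yields $\tfrac12\sum_i\log\tfrac{1+\beta x_i}{1-\beta x_i}$, not the factor $\prod_i(1+\beta x_i)$ asserted in your \eqref{item:1-3}). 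Carrying out the summations you describe, rather than asserting the recognition, would have exposed this.
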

\begin{proof}
\eqref{item:1-1} follows from $\bra{0}b_{-n}=0$ for $n>0$.
\eqref{item:1-2} and \eqref{item:1-3} are given in \cite[\S 4.3]{iwao2021neutralfermionic}.
\eqref{item:1-4} follows from the following equation, which is given in \cite[\S 4.2]{iwao2021neutralfermionic},
\[
e^{\mathcal{H}^{(\beta)}}\phi^{(\beta)}(z)e^{\mathcal{H}^{-(\beta)}}
=
\exp\left(\sum_{n=1}^\infty\frac{p_n(x)}{n}(z^n-(-z-\beta)^n)\right)\phi^{(\beta)}(z)
\]
and the identity $\exp\left(\sum_{n=1}^\infty\frac{p_n(x)}{n}A^n\right)=\prod_i\frac{1}{1-x_iA}$.
\eqref{item:1-5} is given in \cite[\S 3.2]{iwao2021neutralfermionic}.
\end{proof}

\begin{lemma}\label{lemma:commutation_rels}
We have the following commutation relations:
\begin{enumerate}
\item\label{item:2-1} $e^{\theta}\ket{0}=\ket{0}$.
\item \label{item:2-2}
$e^{-\theta}\phi^{[\beta]}(z)e^{\theta}=\frac{1}{1+\beta z}\cdot \phi^{[\beta]}(z)$.
\item\label{item:2-3} $e^\theta$ and $e^{\mathcal{H}^{[\beta]}}$ commute with each other.
\item \label{item:2-4}
$e^{\mathcal{H}^{[\beta]}}\phi^{[\beta]}(z)e^{-\mathcal{H}^{[\beta]}}
=
\prod_{i}\frac{1-x_i\overline{z}}{1-x_iz}\cdot
\phi^{[\beta]}(z)
$.
\item \label{item:2-5}
$
\left\langle
\phi^{[\beta]}(z)\phi^{[\beta]}(w)
\right\rangle
=\frac{z-w}{z\oplus w},
$
where $\frac{z-w}{z\oplus w}$ is understood as
\[
\begin{aligned}
&\textstyle
\frac{1-wz^{-1}}{1+wz^{-1}+\beta w}=
1-(2z^{-1}+\beta)w+(2z^{-2}+3\beta z^{-1}+\beta^2)w^2-\cdots.
\end{aligned}
\]
This is an element of the field $\QQ(\beta)((z))((w))$.
\end{enumerate}
\end{lemma}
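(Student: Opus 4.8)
The plan is to read Lemma~\ref{lemma:commutation_rels} as the exact dual of Lemma~\ref{lemma:comm_rels_0} under the anti-algebra involution $x\mapsto x^\ast$, and to derive the five identities \eqref{item:2-1}--\eqref{item:2-5} from \eqref{item:1-1}--\eqref{item:1-5} whenever possible. The dictionary I would use is the one recorded in Figure~\ref{fig:beta-objects}: $\theta=\Theta^\ast$, $b_n^{[\beta]}=(b_{-n}^{(\beta)})^\ast$, $\ket0^\ast=\bra0$, together with the consequence of $\phi_n^{[\beta]}=(-1)^n(\phi_{-n}^{(-\beta)})^\ast$ that, as formal series,
\[
(\phi^{(\beta)}(z))^\ast=\phi^{[-\beta]}(-z^{-1}).
\]
A second observation that makes the bookkeeping clean is that every defining sum (for $\Theta,\theta,\mathcal{H}^{(\beta)},\dots$) runs over odd indices, so replacing $\beta$ by $-\beta$ merely flips an overall sign, e.g.\ $\theta|_{\beta\mapsto-\beta}=-\theta$ and $\Theta|_{\beta\mapsto-\beta}=-\Theta$.

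Granting this, I would first dispose of \eqref{item:2-1}, \eqref{item:2-2}, \eqref{item:2-5}. For \eqref{item:2-2} I would apply $\ast$ to the $(-\beta)$-instance of \eqref{item:1-2}, namely $e^{-\Theta}\phi^{(-\beta)}(z)e^{\Theta}=\tfrac{1}{1-\beta z^{-1}}\phi^{(-\beta)}(z)$ with $\Theta$ taken at $-\beta$; using $(\Theta|_{-\beta})^\ast=-\theta$ and $(\phi^{(-\beta)}(z))^\ast=\phi^{[\beta]}(-z^{-1})$, and then renaming the spectral variable through $-z^{-1}\mapsto z$, the scalar $\tfrac{1}{1-\beta z^{-1}}$ turns into $\tfrac{1}{1+\beta z}$ and one lands exactly on \eqref{item:2-2}. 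Identity \eqref{item:2-1} is $\ast$ applied to the $(-\beta)$-instance of \eqref{item:1-1} (or, more directly, follows from $b_n\ket0=0$ for $n>0$). For \eqref{item:2-5} I would use that the vacuum expectation is invariant under $\ast$ (a consequence of Wick's theorem \eqref{eq:Wick} and the symmetry of the two-point function), apply it to \eqref{item:1-5} at $-\beta$, and substitute $-z^{-1}\mapsto z$, $-w^{-1}\mapsto w$; alternatively \eqref{item:2-5} is computed directly by the method of \cite[\S3.2]{iwao2021neutralfermionic} that proves \eqref{item:1-5}.

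Identity \eqref{item:2-3} I would treat separately, because it is genuinely simpler than its would-be partner \eqref{item:1-3}: the latter carries the scalar $\prod_i(1+\beta x_i)$ precisely because $\Theta$ is built from the negative modes $b_{-n}$ while $\mathcal{H}^{(\beta)}$ is built from the positive modes. By contrast $\theta$ and $\mathcal{H}^{[\beta]}$ are both expressed through the positive-mode currents $b_n,b_n^{[\beta]}$ ($n>0$), and since the currents satisfy a Heisenberg relation in which any two positive modes commute, $\theta$ and $\mathcal{H}^{[\beta]}$ commute, and hence so do $e^\theta$ and $e^{\mathcal{H}^{[\beta]}}$.

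The main obstacle is \eqref{item:2-4}, where the $\ast$-route breaks down: applying $\ast$ to \eqref{item:1-4} conjugates by $(\mathcal{H}^{(\beta)})^\ast=2\sum_{n\ \mathrm{odd}}\tfrac{p_n}{n}b_{-n}^{[\beta]}$, a \emph{negative}-mode Hamiltonian, not by $\mathcal{H}^{[\beta]}$. I would therefore prove \eqref{item:2-4} directly, imitating the proof of \eqref{item:1-4}: first establish the $[\beta]$-analogue of the operator-product formula of \cite[\S4.2]{iwao2021neutralfermionic}, i.e.\ deduce from $[\mathcal{H}^{[\beta]},\phi^{[\beta]}(z)]$ an expression $e^{\mathcal{H}^{[\beta]}}\phi^{[\beta]}(z)e^{-\mathcal{H}^{[\beta]}}=\exp\!\big(\sum_{n\geq1}\tfrac{p_n(x)}{n}A_n(z)\big)\,\phi^{[\beta]}(z)$, and then collapse the exponential by the identity $\exp(\sum_n\tfrac{p_n}{n}A^n)=\prod_i\tfrac{1}{1-x_iA}$ to reach the closed product $\prod_i\tfrac{1-x_i\overline{z}}{1-x_iz}$. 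The crux is identifying the series $A_n(z)$ produced by $\mathcal{H}^{[\beta]}$ and checking that the two resulting factors assemble into $1-x_i\overline{z}$ and $1-x_iz$ with $\overline{z}=\tfrac{-z}{1+\beta z}$; this is the only step requiring a genuine computation rather than an application of the involution.
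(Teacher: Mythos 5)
Your proposal is correct, but it is organized quite differently from what the paper actually writes. For \eqref{item:2-1} and \eqref{item:2-3} you give exactly the paper's arguments ($b_n\ket{0}=0$ for $n>0$, and the fact that $\theta$ and $\mathcal{H}^{[\beta]}$ are both supported on the mutually commuting positive current modes). For \eqref{item:2-2}, \eqref{item:2-4}, \eqref{item:2-5}, however, the paper supplies no argument at all: it simply cites \cite[\S\S 9.1, 10.1]{iwao2021neutralfermionic}. Your involution route for \eqref{item:2-2} and \eqref{item:2-5} --- applying $x\mapsto x^\ast$ together with $\beta\mapsto-\beta$ to Lemma \ref{lemma:comm_rels_0} \eqref{item:1-2} and \eqref{item:1-5}, using $(\phi^{(-\beta)}(z))^\ast=\phi^{[\beta]}(-z^{-1})$, $\Theta|_{\beta\mapsto-\beta}=-\Theta$, $\Theta^\ast=\theta$, and the $\ast$-invariance of the vacuum expectation --- is sound (I checked the sign and variable bookkeeping: the substitution $z\mapsto -z^{-1}$ turns $\frac{1}{1-\beta z^{-1}}$ into $\frac{1}{1+\beta w}$ and $\frac{w^{-1}-z^{-1}}{w^{-1}\oplus z^{-1}}$ into $\frac{z-w}{z\oplus w}$, with the expansion regions matching), and it has the virtue of making these two items self-contained consequences of Lemma \ref{lemma:comm_rels_0} rather than external citations. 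You also correctly diagnose why the same trick cannot give \eqref{item:2-4} --- $(\mathcal{H}^{(\beta)})^\ast$ is a negative-mode Hamiltonian, not $\mathcal{H}^{[\beta]}$ --- which is a point the paper never makes explicit. The only part of your write-up that remains a sketch is the direct computation of \eqref{item:2-4} (identifying the series $A_n(z)$ and collapsing the exponential); since the paper itself defers this to the cited reference, that is not a gap relative to the paper, but it is the one place where your proof is not yet complete as written.
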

\begin{proof}
\eqref{item:2-1} follows from $b_{n}\ket{0}=0$ for $n>0$.
\eqref{item:2-2} and \eqref{item:2-5} are given in \cite[\S 10.1]{iwao2021neutralfermionic}.
\eqref{item:2-3} follows from $[b_m,b_n]=0$ for $m,n>0$.
\eqref{item:2-4} is given in \cite[\S 9.1]{iwao2021neutralfermionic}.
\end{proof}

\begin{cor}\label{cor:sublemma}
We have
\begin{enumerate}
\item\label{item:aa} $e^{\theta}\phi_n^{[\beta]}e^{-\theta}=\phi^{[\beta]}_n+\beta \phi^{[\beta]}_{n-1}$,
\item
$e^{-\theta}\phi_n^{[\beta]}e^{\theta}=\phi^{[\beta]}_n-\beta \phi^{[\beta]}_{n-1}+\beta^2\phi^{[\beta]}_{n-2}-\cdots$,
\item\label{item:bb} 
$e^{\theta}(\phi_n^{(\beta)})^\ast e^{-\theta}=
(\phi_n^{(\beta)}
-\beta \phi_{n+1}^{(\beta)}
+\beta^2 \phi_{n+2}^{(\beta)}-\cdots)^\ast
$.
\end{enumerate}
\end{cor}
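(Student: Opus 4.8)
The plan is to deduce all three identities from the single field-level relations already proved, namely part \eqref{item:2-2} of Lemma~\ref{lemma:commutation_rels} for the $[\beta]$-fields and part \eqref{item:1-2} of Lemma~\ref{lemma:comm_rels_0} for the $(\beta)$-fields, by expanding the scalar prefactor into a formal series and comparing the coefficient of $z^n$. For \eqref{item:aa} and the middle identity I would work directly with $e^{-\theta}\phi^{[\beta]}(z)e^{\theta}=\frac{1}{1+\beta z}\phi^{[\beta]}(z)$. Expanding $\frac{1}{1+\beta z}=\sum_{k\ge 0}(-\beta)^k z^k$ and inserting $\phi^{[\beta]}(z)=\sum_m \phi^{[\beta]}_m z^m$, the coefficient of $z^n$ on the right is $\sum_{k\ge 0}(-\beta)^k\phi^{[\beta]}_{n-k}$, and matching it against the left-hand coefficient $e^{-\theta}\phi^{[\beta]}_n e^{\theta}$ gives the middle formula $\phi^{[\beta]}_n-\beta\phi^{[\beta]}_{n-1}+\beta^2\phi^{[\beta]}_{n-2}-\cdots$. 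For \eqref{item:aa} I would invert the same relation: conjugating back by $e^{\theta}$ on the left and $e^{-\theta}$ on the right yields $e^{\theta}\phi^{[\beta]}(z)e^{-\theta}=(1+\beta z)\phi^{[\beta]}(z)$, whose $z^n$-coefficient is simply $\phi^{[\beta]}_n+\beta\phi^{[\beta]}_{n-1}$.

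For \eqref{item:bb} the idea is to transport part \eqref{item:1-2} of Lemma~\ref{lemma:comm_rels_0} through the anti-involution $\ast$. Since $\ast$ is an anti-algebra homomorphism and $\theta=\Theta^{\ast}$, one has $(X^k)^{\ast}=(X^{\ast})^k$ and hence $(e^{\pm\Theta})^{\ast}=e^{\pm\theta}$, together with $(ABC)^{\ast}=C^{\ast}B^{\ast}A^{\ast}$. Applying $\ast$ to $e^{-\Theta}\phi^{(\beta)}(z)e^{\Theta}=\frac{1}{1+\beta z^{-1}}\phi^{(\beta)}(z)$ then produces $e^{\theta}(\phi^{(\beta)}(z))^{\ast}e^{-\theta}=\frac{1}{1+\beta z^{-1}}(\phi^{(\beta)}(z))^{\ast}$, the scalar prefactor being untouched. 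Expanding $\frac{1}{1+\beta z^{-1}}=\sum_{k\ge 0}(-\beta)^k z^{-k}$ and extracting the coefficient of $z^n$ gives $e^{\theta}(\phi^{(\beta)}_n)^{\ast}e^{-\theta}=\sum_{k\ge 0}(-\beta)^k(\phi^{(\beta)}_{n+k})^{\ast}$, which by linearity of $\ast$ is exactly $\big(\phi^{(\beta)}_n-\beta\phi^{(\beta)}_{n+1}+\beta^2\phi^{(\beta)}_{n+2}-\cdots\big)^{\ast}$.

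The one point that genuinely needs care, and the step I expect to be the main obstacle, is the bookkeeping of expansion conventions in \eqref{item:bb}: I must verify that $\ast$ acts coefficientwise on $\phi^{(\beta)}(z)=\sum_n\phi^{(\beta)}_n z^n$, treating $z$ as a central formal parameter, and that $\frac{1}{1+\beta z^{-1}}$ is expanded in the same ring $\QQ(\beta)((z^{-1}))$ in which part \eqref{item:1-2} of Lemma~\ref{lemma:comm_rels_0} is stated, so that the index shift comes out as $n\mapsto n+k$ rather than $n\mapsto n-k$. Once the consistency of these conventions is granted, all three assertions reduce to the coefficient extractions described above.
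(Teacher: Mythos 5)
Your proposal is correct and is exactly the argument the paper intends: the corollary is stated without proof as an immediate consequence of Lemma \ref{lemma:comm_rels_0} \eqref{item:1-2} and Lemma \ref{lemma:commutation_rels} \eqref{item:2-2}, and your coefficient extraction (plus inverting the conjugation for \eqref{item:aa} and transporting through $\ast$ for \eqref{item:bb}) fills in precisely the expected details. The expansion conventions you flag are consistent with the paper's, since $\phi^{(\beta)}(z)$ acts on $\mathcal{F}^\ast$ valued in $\mathcal{F}^\ast((z^{-1}))$, so $\frac{1}{1+\beta z^{-1}}$ is indeed expanded in powers of $z^{-1}$ and the shift $n\mapsto n+k$ is the right one.
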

\begin{proof}
The equations (1) and (2) follow from Lemma \ref{lemma:commutation_rels} \eqref{item:2-2}.
The equation (3) follows from Lemma \ref{lemma:comm_rels_0} \eqref{item:1-2}.
\end{proof}

\section{$GP_\lambda$ and $GQ_\lambda$-functions}\label{sec:main_part}

Let $\ket{\lambda}_Q$ be the element of $ \widehat{\mathcal{F}}_{even}$ defined by
\begin{equation}\label{eq:vector_Q}
\ket{\lambda}_Q=
\begin{cases}
\phi^{(\beta)}_{\lambda_1}e^{\Theta}\phi^{(\beta)}_{\lambda_2}e^{\Theta}\cdots
\phi^{(\beta)}_{\lambda_r}e^{\Theta}\ket{0} & (r:\mbox{even}),\\
\phi^{(\beta)}_{\lambda_1}e^{\Theta}\phi^{(\beta)}_{\lambda_2}e^{\Theta}\cdots
\phi^{(\beta)}_{\lambda_r}e^{\Theta}
\phi^{(\beta)}_{0}e^{\Theta}
\ket{0}  & (r:\mbox{odd}).
\end{cases}
\end{equation}
Note that $\ket{\lambda}_Q$ is \textit{not} an element of $\mathcal{F}_{even}$ if $\lambda\neq \emptyset$.
The main theorem of the previous paper~\cite{iwao2021neutralfermionic} is described as follows:
\begin{thm}[{\cite[\S 7]{iwao2021neutralfermionic}}]\label{thm:prev_main_theorem}
The $K$-theoretic $Q$-function $GQ_\lambda(x)$ is expressed as 
\begin{equation*}
GQ_\lambda(x)=\bra{0}e^{\mathcal{H}^{(\beta)}}\ket{\lambda}_Q
=\Omega(\ket{\lambda}_Q).
\end{equation*}
\end{thm}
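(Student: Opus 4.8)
The plan is to evaluate the vacuum expectation value $\bra{0}e^{\mathcal{H}^{(\beta)}}\ket{\lambda}_Q$ in closed form and then match it against the generating function of $GQ_\lambda$ obtained by Nakagawa--Naruse in \cite{nakagawa2018universalfactrial}. Since $\phi^{(\beta)}_{\lambda_j}$ is the coefficient of $z_j^{\lambda_j}$ in $\phi^{(\beta)}(z_j)$, the first step is to pass to the generating series
\[
F(z_1,\dots,z_r):=\bra{0}e^{\mathcal{H}^{(\beta)}}\phi^{(\beta)}(z_1)e^{\Theta}\phi^{(\beta)}(z_2)e^{\Theta}\cdots\phi^{(\beta)}(z_r)e^{\Theta}\ket{0},
\]
so that $GQ_\lambda(x)$ is the coefficient of $z_1^{\lambda_1}\cdots z_r^{\lambda_r}$ in $F$ (for even $r$; the odd case carries a trailing $\phi^{(\beta)}_0e^{\Theta}$). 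The second equality $\bra{0}e^{\mathcal{H}^{(\beta)}}\ket{\lambda}_Q=\Omega(\ket{\lambda}_Q)$ I would treat as essentially definitional: $\Omega$ is the extension of $\Omega_0:\ket v\mapsto\bra0 e^{\mathcal{H}^{(\beta)}}\ket v$ to $\widehat{\mathcal{F}}_{even}$, and since $\ket{\lambda}_Q\in\widehat{\mathcal{F}}_{even}$ one only has to verify that this extension is compatible with the vacuum-expectation formula, which follows from the pairing identity \eqref{eq:inner_pres}.

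Next I would commute $e^{\mathcal{H}^{(\beta)}}$ rightward through the entire product. By Lemma \ref{lemma:comm_rels_0}\eqref{item:1-4} each field $\phi^{(\beta)}(z_j)$ contributes the scalar $\prod_i\frac{z_j^{-1}\oplus x_i}{z_j^{-1}-x_i}$, and by Lemma \ref{lemma:comm_rels_0}\eqref{item:1-3} each factor $e^{\Theta}$ contributes $\prod_i(1+\beta x_i)$. Because $b^{(\beta)}_n$ for $n>0$ is a polynomial in the positive-index currents $b_1,b_3,\dots$, all of which annihilate $\ket0$, one has $e^{\mathcal{H}^{(\beta)}}\ket0=\ket0$, so the Hamiltonian is absorbed at the right-hand vacuum. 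This expresses $F$ as these symmetric-function prefactors times the purely fermionic value $\bra0\phi^{(\beta)}(z_1)e^{\Theta}\cdots\phi^{(\beta)}(z_r)e^{\Theta}\ket0$.

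I would then clear the remaining $e^{\Theta}$'s by pushing each one to the left: Lemma \ref{lemma:comm_rels_0}\eqref{item:1-2} lets an $e^{\Theta}$ cross a field $\phi^{(\beta)}(z_j)$ at the cost of a scalar $(1+\beta z_j^{-1})^{-1}$, and Lemma \ref{lemma:comm_rels_0}\eqref{item:1-1}, $\bra0e^{\Theta}=\bra0$, absorbs it once it reaches the left. What survives is $\bra0\phi^{(\beta)}(z_1)\cdots\phi^{(\beta)}(z_r)\ket0$ multiplied by explicit powers of $(1+\beta z_j^{-1})^{-1}$; Wick's theorem \eqref{eq:Wick} together with the two-point function of Lemma \ref{lemma:comm_rels_0}\eqref{item:1-5} then evaluates this as $\mathrm{Pf}\big(\tfrac{z_j^{-1}-z_i^{-1}}{z_j^{-1}\oplus z_i^{-1}}\big)_{1\le i<j\le r}$. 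The odd-$r$ case runs identically once the terminal $\phi^{(\beta)}_0$ is handled using \eqref{eq:vs_phi_0}.

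Assembling the prefactors with the Pfaffian gives $F$ in closed form, and the theorem reduces to extracting the coefficient of $z_1^{\lambda_1}\cdots z_r^{\lambda_r}$ and comparing with the Nakagawa--Naruse generating function. I expect the main obstacle to be this final comparison. Concretely, one must show that the accumulated $\oplus$-products and the Pfaffian coincide identically with the Nakagawa--Naruse expression; this is delicate because the two-point function in \eqref{item:1-5} is expanded in the ordered Laurent field $\QQ(\beta)((w^{-1}))((z^{-1}))$, so the coefficient extraction must respect this ordering for the Pfaffian to reproduce the correct symmetric function, and the odd-length normalization must be checked separately. Carrying out this bookkeeping is the crux of the argument.
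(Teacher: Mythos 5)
Your proposal follows essentially the same route as the paper: pass to the generating series, commute $e^{\mathcal{H}^{(\beta)}}$ and the $e^{\Theta}$'s through the product using Lemma \ref{lemma:comm_rels_0}, evaluate the remaining correlator by Wick's theorem and the two-point function, and compare with the Nakagawa--Naruse generating function. The one step you leave open as the ``crux'' --- showing the Pfaffian reproduces the product form --- is closed in the paper by the Ikeda--Naruse Schur-type Pfaffian identity \eqref{eq:IN-formula}, which converts $\mathrm{Pf}\bigl(\tfrac{z_j^{-1}-z_i^{-1}}{z_j^{-1}\oplus z_i^{-1}}\bigr)$ directly into $\prod_{i<j}\tfrac{z_j^{-1}-z_i^{-1}}{z_j^{-1}\oplus z_i^{-1}}$; also, rather than running the odd-$r$ case separately as you suggest, the paper reduces to even $r$ by appending $\lambda_{r+1}=0$, having first verified $GQ_{(\lambda,0)}=GQ_\lambda$ by a residue computation.
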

In \cite{iwao2021neutralfermionic}, Theorem \ref{thm:prev_main_theorem} was proved by comparing the vacuum expectation value $\bra{0}e^{\mathcal{H}^{(\beta)}}\ket{\lambda}_Q$ with a Pfaffian formula due to Hudson-Ikeda-Matsumura-Naruse~\cite{HUDSON2017115}.
For later use, however, we now provide an alternative proof using the generating function 
\begin{equation}\label{eq:gen_of_GQ}
GQ_\lambda(x)=[u_1^{-\lambda_1}\dots u_{r}^{-\lambda_{r}}]
\prod_{i=1}^r
\frac{1}{1+\beta u_i}
\prod_{i,j}\frac{u_i\oplus x_j}{u_i\ominus x_j}
\prod_{1\leq i<j\leq r}\frac{u_j\ominus u_i}{u_j\oplus u_i},
\end{equation}
which was introduced by Nakagawa-Naruse \cite[\S 5.2]{nakagawa2018universalfactrial}.
Here, $[u_1^{n_1}\dots u_r^{n_r}]F(u_1,\dots,u_r)$ is the coefficient of the monomial $u_1^{n_1}\dots u_r^{n_r}$ in the expansion of $F$.
The rational function on the right hand side of \eqref{eq:gen_of_GQ} is understood as an element of the field
\[
\QQ(\beta)((u_r))\dots ((u_2))((u_1))[[x_1,x_2,\dots]]
\]
via the Laurent expansion on the domain $\{|x_j|<|u_1|<|u_2|<\dots<|u_r|<|\beta^{-1}|:\forall j\}$.

Note that the equation
\begin{equation}\label{eq:substitution_GQ}
GQ_{(\lambda,0)}(x)
=
GQ_{\lambda}(x)
\end{equation}
is not trivial from \eqref{eq:gen_of_GQ} because the right hand side of \eqref{eq:gen_of_GQ} does not admit the simple substitution $u_{r}=0$.
In fact, we need some tedious computation to derive \eqref{eq:substitution_GQ}, which we will explain in the Appendix.

\begin{proof}[Proof of Theorem \ref{thm:prev_main_theorem}]
Without loss of generality, we may assume that $r$ is even by appending $\lambda_{r+1}=0$ to the end of the strict partition if necessary. 
Let $\mathcal{GQ}(z_1,\dots,z_r)$ be the formal series defined by
\[
\mathcal{GQ}(z_1,\dots,z_r):=\left\langle
e^{\mathcal{H}^{(\beta)}}
\phi^{(\beta)}(z_1)e^{\Theta}
\cdots
\phi^{(\beta)}(z_{r})e^{\Theta}
\right\rangle.
\]
Then, we find that
\begin{equation}\label{eq:GQ_to_GQvector}
\bra{0}e^{\mathcal{H}^{(\beta)}}\ket{\lambda}_Q=[z_1^{\lambda_1}\dots z_r^{\lambda_r}]
\left(\mathcal{GQ}(z_1,\dots,z_r)\right).
\end{equation}
An explicit expression of $\mathcal{GQ}(z_1,\dots,z_r)$ can be calculated by using the anti-commutation relations given in Lemma \ref{lemma:comm_rels_0}.
In fact, we have
\begin{align*}
&
\left\langle
e^{\mathcal{H}^{(\beta)}}
\phi^{(\beta)}(z_1)e^{\Theta}
\cdots
\phi^{(\beta)}(z_{r})e^{\Theta}
\right\rangle\\
&=
\prod_j{(1+\beta x_j)^{r}}
\prod_i\frac{1}{(1+\beta z_i^{-1})^{r-i+1}}
\prod_{i,j}\frac{z_i^{-1}\oplus x_j}{z_i^{-1}-x_j}
\left\langle
\phi^{(\beta)}(z_1)
\cdots
\phi^{(\beta)}(z_{r})
\right\rangle
\qquad (\text{Lemma \ref{lemma:comm_rels_0} \eqref{item:1-1}--\eqref{item:1-4}})
\\
&=
\prod_j{(1+\beta x_j)^{r}}
\prod_i\frac{1}{(1+\beta z_i^{-1})^{r-i+1}}
\prod_{i,j}\frac{z_i^{-1}\oplus x_j}{z_i^{-1}-x_j}
\cdot
\mathrm{Pf}
\left(
\langle
\phi^{(\beta)}(z_i)\phi^{(\beta)}(z_j)
\rangle
\right)_{1\leq i<j\leq r}
\quad (\mathrm{Eq.~\eqref{eq:Wick}})
\allowdisplaybreaks
\\
&=
\prod_j{(1+\beta x_j)^{r}}
\prod_i\frac{1}{(1+\beta z_i^{-1})^{r-i+1}}
\prod_{i,j}\frac{z_i^{-1}\oplus x_j}{z_i^{-1}-x_j}
\cdot
\mathrm{Pf}
\left(
\frac{z_j^{-1}-z_i^{-1}}{z_j^{-1}\oplus z_i^{-1}}
\right)_{1\leq i<j\leq r}
\quad (\mathrm{Lemma~\ref{lemma:comm_rels_0}~\eqref{item:1-5}})
\allowdisplaybreaks
\\
&
\stackrel{(\ast)}{=}
\prod_j{(1+\beta x_j)^{r}}
\prod_i\frac{1}{(1+\beta z_i^{-1})^{r-i+1}}
\prod_{i,j}\frac{z_i^{-1}\oplus x_j}{z_i^{-1}-x_j}
\prod_{1\leq i<j\leq r}
\frac{z_j^{-1}-z_i^{-1}}{z_j^{-1}\oplus z_i^{-1}}\\
&=
\prod_i\frac{1}{1+\beta z_i^{-1}}
\prod_{i,j}\frac{z_i^{-1}\oplus x_j}{z_i^{-1}\ominus x_j}
\prod_{1\leq i<j\leq r}
\frac{z_j^{-1}\ominus z_i^{-1}}{z_j^{-1}\oplus z_i^{-1}}.
\end{align*}
For the equality ($\ast$), we used the following formula of Ikeda-Naruse~\cite[Lemma 2.4]{IKEDA201322}:
\begin{equation}\label{eq:IN-formula}
\mathrm{Pf}\left(
\frac{T_i-T_j}{T_i\oplus T_j}
\right)_{1\leq i<j\leq r}=
\prod_{1\leq i<j\leq r}\frac{T_i-T_j}{T_i\oplus T_j}.
\end{equation}
By substituting $z_i=u_i^{-1}$ and comparing \eqref{eq:gen_of_GQ} with \eqref{eq:GQ_to_GQvector}, we obtain the desired equation $GQ_\lambda(x)=\bra{0}e^{\mathcal{H}^{(\beta)}}\ket{\lambda}_Q$.
\end{proof}

A similar result holds for the $GP$-function $GP_\lambda(x)$.
Let $\ket{\lambda}_P$ be the element of $\widehat{\mathcal{F}}_{even}$ defined by
\begin{equation}\label{eq:vector_P}
\ket{\lambda}_P=
\begin{cases}
\Phi^{(\beta)}_{\lambda_1}e^{\Theta}
\Phi^{(\beta)}_{\lambda_2}e^{\Theta}\cdots
\Phi^{(\beta)}_{\lambda_r}e^{\Theta}\ket{0} & (r:\mbox{even}),\\
\Phi^{(\beta)}_{\lambda_1}e^{\Theta}
\Phi^{(\beta)}_{\lambda_2}e^{\Theta}\cdots
\Phi^{(\beta)}_{\lambda_r}e^{\Theta}
\phi^{(\beta)}_{0}e^{\Theta}
\ket{0}  & (r:\mbox{odd}).
\end{cases}
\end{equation}

\begin{thm}\label{thm:GP}
The $K$-theoretic $P$-function $GP_\lambda(x)$ is expressed as
\[
GP_\lambda(x)=\bra{0}e^{\mathcal{H}^{(\beta)}}\ket{\lambda}_P=\Omega(\ket{\lambda}_P).
\]
\end{thm}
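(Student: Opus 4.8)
The plan is to mirror the proof of Theorem~\ref{thm:prev_main_theorem} almost verbatim, exploiting the single structural fact that $\Phi^{(\beta)}(z)$ is a \emph{scalar} multiple of $\phi^{(\beta)}(z)$, namely $\Phi^{(\beta)}(z)=\frac{1}{2+\beta z^{-1}}\phi^{(\beta)}(z)$ by Definition~\ref{def:Phi}. On the symmetric-function side I would compare with the Nakagawa--Naruse generating function of $GP_\lambda$, which differs from \eqref{eq:gen_of_GQ} for $GQ_\lambda$ exactly by the extra factor $\prod_{i=1}^r\frac{1}{2+\beta u_i}$; note that at $\beta=0$ this factor becomes $2^{-r}=2^{-\ell(\lambda)}$, reproducing the classical relation $P_\lambda=2^{-\ell(\lambda)}Q_\lambda$, which is a good consistency check on the normalization.

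For even $r$ I would form the generating series $\langle e^{\mathcal{H}^{(\beta)}}\Phi^{(\beta)}(z_1)e^{\Theta}\cdots\Phi^{(\beta)}(z_r)e^{\Theta}\rangle$. Since each prefactor $\frac{1}{2+\beta z_i^{-1}}$ is a scalar in $\QQ(\beta)((z_i^{-1}))$, it commutes with every operator and may be pulled out front, so the series equals $\prod_{i=1}^r\frac{1}{2+\beta z_i^{-1}}$ times $\langle e^{\mathcal{H}^{(\beta)}}\phi^{(\beta)}(z_1)e^{\Theta}\cdots\phi^{(\beta)}(z_r)e^{\Theta}\rangle$. The second factor was already evaluated in the proof of Theorem~\ref{thm:prev_main_theorem} through Wick's theorem \eqref{eq:Wick}, Lemma~\ref{lemma:comm_rels_0}, and the Ikeda--Naruse identity \eqref{eq:IN-formula}. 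Substituting $z_i=u_i^{-1}$ and extracting the coefficient of $z_1^{\lambda_1}\cdots z_r^{\lambda_r}$ then reproduces the $GP_\lambda$ generating function term by term, yielding $GP_\lambda=\bra{0}e^{\mathcal{H}^{(\beta)}}\ket{\lambda}_P=\Omega(\ket{\lambda}_P)$, where the last equality is the definition of the extension $\Omega$.

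For odd $r$ the delicate point, and the one I expect to be the main obstacle, is that $\ket{\lambda}_P$ ends in $\phi^{(\beta)}_0e^{\Theta}$ rather than $\Phi^{(\beta)}_0e^{\Theta}$; this is forced because $\Phi^{(\beta)}_0\ket{0}\neq 0$ by \eqref{eq:lack_of_ann_rule}. Consequently the extra field $\phi^{(\beta)}(z_{r+1})$ contributes the factor $\frac{1}{1+\beta z_{r+1}^{-1}}$ but \emph{not} the scalar $\frac{1}{2+\beta z_{r+1}^{-1}}$, so the $(r+1)$-st variable appears exactly as in the $GQ$-computation rather than as a genuine $\Phi$-slot. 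Taking the coefficient of $z_{r+1}^0$ should then reduce, through the very same residue calculation that establishes $GQ_{(\lambda,0)}=GQ_\lambda$ (where the residue at $w=-\beta$ equals $1$), to the even-$r$ generating function of $GP_\lambda$. The step I would have to justify with care is precisely that inserting $\phi^{(\beta)}_0$ in place of $\Phi^{(\beta)}_0$ is what makes this residue equal $1$, and is therefore consistent with the normalization $GP_{(\lambda,0)}=GP_\lambda$; granting this, the odd case collapses onto the even case and the proof is complete.
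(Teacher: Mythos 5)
Your proposal is correct and follows exactly the paper's (very terse) argument: pull the scalar $\frac{1}{2+\beta z^{-1}}$ out of each $\Phi^{(\beta)}$-slot, reuse the Wick/Ikeda--Naruse computation from Theorem~\ref{thm:prev_main_theorem}, and match the result against the Nakagawa--Naruse generating function \eqref{eq:quote_GP}, whose only difference from \eqref{eq:gen_of_GQ} is the factor $\prod_i\frac{1}{2+\beta u_i}$. Your treatment of the odd-$r$ case via the residue-equals-$1$ reduction is likewise consistent with how the paper handles $GQ_{(\lambda,0)}=GQ_\lambda$, so no gap remains.
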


\begin{proof}
This theorem is given by comparing the definition of $\Phi^{(\beta)}(z)$ (Definition \ref{def:Phi})
and the generating function 
\begin{align}\label{eq:quote_GP}
&GP_\lambda(x)=[u_1^{-\lambda_1}\dots u_r^{-\lambda_r}]
\prod_{i}
\frac{1}{2+\beta u_i}
\frac{1}{1+\beta u_i}
\prod_{i,j}\frac{u_i\oplus x_j}{u_i\ominus x_j}
\prod_{1\leq i<j\leq r}\frac{u_j\ominus u_i}{u_j\oplus u_i},
\end{align}
which was given in \cite[\S 4.1]{nakagawa2018universalfactrial}\footnote{
In \cite[\S 4.1]{nakagawa2018universalfactrial}, Nakagawa-Naruse presented the generating function
\[
HP_{\lambda}(x)
=HP_{\lambda}(x|\mathbf{0})
=[\mathbf{u}^{-\lambda}]
\prod_i\frac{u_i}{u_i+_{\LL}[t]\overline{u_i}}\cdot
\frac{1}{\mathcal{I}^{\LL}(u_i)}\cdot
\prod_{i,j}\frac{u_i+_{\LL}[t]\overline{x_j}}{u_i+_{\LL}\overline{x_j}}
\cdot
\prod_{j<i}
\frac{u_i+_{\LL}\overline{u_j}}{u_i+_{\LL}[t]\overline{u_j}}
\]
of the universal $P$-function $HP_\lambda(x)$.
Here, $HP_\lambda(x|b)$ is the universal factorial $P$-function.
For the $K$-theory setting, we substitute
$HP_\lambda(x)\mapsto GP_\lambda(x)$,
$x+_{\LL}y\mapsto x\oplus y$,
$\mathcal{I}^{\LL}(u)\mapsto 1+\beta u$,
$[t]\overline{u}\mapsto u$
to obtain \eqref{eq:quote_GP}.
}.
\end{proof}

\section{$gp_\lambda$ and $gq_\lambda$-functions}\label{sec:gp_and_gq}

In this section, we provide a fermionic presentation of the dual $K$-theoretic functions $gq_\lambda$ and $gp_\lambda$ defined in \eqref{eq:Cauchy_identity}.
By comparing \eqref{eq:Cauchy_identity} with Lemma \ref{lemma:Cauchy}, we see that these functions are the unique elements of $g\Gamma$ satisfying the duality relations
\[
\langle
GQ_\lambda,gp_\mu
\rangle 
=
\langle
GP_\lambda,gq_\mu
\rangle 
=\delta_{\lambda,\mu},
\]
where $\langle \cdot,\cdot\rangle$ denotes the bilinear form defined in \eqref{eq:def_of_bilinear_form}.

\subsection{Fermionic presentation of $gq$}\label{sec:Fermion_of_gq}

For a strict partition $\lambda$, let $\ket{\lambda}_q$ be the element of $\mathcal{F}_{even}$ defined by 
\begin{equation}\label{eq:ket_q}
\ket{\lambda}_q:=
\begin{cases}
\phi^{[\beta]}_{\lambda_1}e^{-\theta}
\phi^{[\beta]}_{\lambda_2}e^{-\theta}
\dots
\phi^{[\beta]}_{\lambda_r}e^{-\theta} 
\ket{0} & (r:\mbox{even}),\\
\phi^{[\beta]}_{\lambda_1}e^{-\theta}
\phi^{[\beta]}_{\lambda_2}e^{-\theta}
\dots
\phi^{[\beta]}_{\lambda_r}e^{-\theta} 
\phi^{[\beta]}_{0}e^{-\theta} 
\ket{0} & (r:\mbox{odd}).
\end{cases}
\end{equation}
\begin{thm}\label{thm:gq}
The dual $K$-theoretic $Q$-function $gq_\lambda(x)$ is expressed as
\[
gq_\lambda(x)=\bra{0}e^{\mathcal{H}^{[\beta]}}\ket{\lambda}_q=
\chi(\ket{\lambda}_q).
\]
\end{thm}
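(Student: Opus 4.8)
The plan is to reduce the theorem to a single orthonormality identity ${}_P\inner{\mu}{\lambda}_q=\delta_{\lambda,\mu}$ and then to prove that identity by a generating-function computation running parallel to the new proof of Theorem~\ref{thm:prev_main_theorem}. For the reduction, observe that $\chi(\ket{\lambda}_q)$ lies in $g\Gamma$, and that by \eqref{eq:Cauchy_identity} the function $gq_\mu$ is the unique element of $g\Gamma$ with $\langle GP_\lambda,gq_\mu\rangle=\delta_{\lambda,\mu}$ for all strict $\lambda$; this uniqueness is guaranteed by the nondegeneracy \eqref{eq:dual_isom} together with the fact that the $GP_\lambda$ topologically span $\widehat{G\Gamma}$. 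Since $GP_\mu=\Omega(\ket{\mu}_P)$ (Theorem~\ref{thm:GP}) and $\Omega,\chi$ preserve the pairing \eqref{eq:inner_pres}, we get $\langle GP_\mu,\chi(\ket{\lambda}_q)\rangle={}_P\inner{\mu}{\lambda}_q$ with ${}_P\bra{\mu}=(\ket{\mu}_P)^\ast$, so it suffices to show ${}_P\inner{\mu}{\lambda}_q=\delta_{\lambda,\mu}$. Morally this is transparent: the duality $[(\Phi^{(\beta)}_m)^\ast,\phi^{[\beta]}_n]_+=\delta_{m,n}$ (Lemma~\ref{lemma:duality}) is exactly the anticommutator of a pair of dual fermions, so the bare expectation of the alternating product of $(\Phi^{(\beta)}_{\mu_i})^\ast$ and $\phi^{[\beta]}_{\lambda_j}$ ``ought'' to be $\det(\delta_{\mu_i,\lambda_j})=\delta_{\lambda,\mu}$; the real work is to show that the intervening $e^{\pm\theta}$ factors and the fermionic zero modes do not spoil this.

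First I would pad $\mu,\lambda$ to a common even length using the $\phi_0$-convention built into $\ket{\cdot}_P$ and $\ket{\cdot}_q$ (so the extra entries contribute the harmless operators $\phi^{(\beta)}_0$, $\phi^{[\beta]}_0$ rather than $\Phi_0$), just as $GQ_{(\lambda,0)}=GQ_\lambda$ was arranged. Writing the bra and ket through the fields $(\Phi^{(\beta)}(w_i))^\ast$, $\phi^{[\beta]}(z_j)$, the quantity ${}_P\inner{\mu}{\lambda}_q$ is the coefficient of $w_1^{\mu_1}\cdots z_N^{\lambda_N}$ in
\[
\bra{0}\,e^{\theta}(\Phi^{(\beta)}(w_N))^\ast e^{\theta}\cdots e^{\theta}(\Phi^{(\beta)}(w_1))^\ast\,\phi^{[\beta]}(z_1)e^{-\theta}\cdots \phi^{[\beta]}(z_N)e^{-\theta}\,\ket{0}.
\]
Taking adjoints of Lemma~\ref{lemma:comm_rels_0}\eqref{item:1-2} and Lemma~\ref{lemma:commutation_rels}\eqref{item:2-2} gives the field-level conjugations $e^{\theta}(\Phi^{(\beta)}(w))^\ast e^{-\theta}=\frac{1}{1+\beta w^{-1}}(\Phi^{(\beta)}(w))^\ast$ and $e^{-\theta}\phi^{[\beta]}(z)e^{\theta}=\frac{1}{1+\beta z}\phi^{[\beta]}(z)$, so I can sweep every $e^{\pm\theta}$ onto the vacuum (where $e^{c\theta}\ket{0}=\ket{0}$). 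This leaves an explicit scalar prefactor $S(\mathbf{w},\mathbf{z})$ times the bare expectation $\langle(\Phi^{(\beta)}(w_N))^\ast\cdots(\Phi^{(\beta)}(w_1))^\ast\phi^{[\beta]}(z_1)\cdots\phi^{[\beta]}(z_N)\rangle$, which Wick's theorem \eqref{eq:Wick} turns into a Pfaffian of pairwise contractions.

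The three contraction blocks are $\langle\phi^{[\beta]}(z)\phi^{[\beta]}(z')\rangle=\frac{z-z'}{z\oplus z'}$ (Lemma~\ref{lemma:commutation_rels}\eqref{item:2-5}), the cross block governed by Lemma~\ref{lemma:duality}, and the $(\Phi^{(\beta)})^\ast$-self block. The key point is that $(\Phi^{(\beta)}_m)^\ast$ fails to annihilate the vacuum only through the zero mode (equations \eqref{eq:lack_of_ann_rule}, \eqref{eq:vs_phi_0}): consequently the self block carries only nonpositive powers of the $w_i$, and the anomalous part of the cross block is supported in the $z^0$-direction. In the Laurent domain adapted to extracting the positive monomial $w^\mu z^\lambda$, both pieces are invisible, the cross contraction collapses to $\frac{w_iz_j}{1-w_iz_j}$, and — the self block now being absent — the Pfaffian reduces to the Cauchy-type determinant $\det\!\big(\frac{w_iz_j}{1-w_iz_j}\big)$. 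Combining this with $S$, simplifying via the Ikeda-Naruse identity \eqref{eq:IN-formula} and the Cauchy determinant evaluation, and reading off $[w^\mu z^\lambda]$ then yields $\delta_{\lambda,\mu}$, the padding $\phi_0$'s contracting away by the same residue-$1$ mechanism seen for $GQ_{(\lambda,0)}=GQ_\lambda$.

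The main obstacle is exactly this analytic bookkeeping. The operators $(\Phi^{(\beta)}_m)^\ast$ genuinely do not annihilate $\ket{0}$ (equations \eqref{eq:lack_of_ann_rule}, \eqref{eq:vs_phi_0}), so I must pin down the orders of the formal Laurent expansions — as in the footnoted subtlety in the proof of Theorem~\ref{thm:prev_main_theorem} — so that every zero-mode and boundary contribution is killed by the coefficient $[w^\mu z^\lambda]$, and then verify that the residual Cauchy determinant weighted by $S$ collapses precisely to the Kronecker delta, including the vanishing when $\ell(\mu)\neq\ell(\lambda)$. Once these domain conventions are fixed, the remaining determinant and Pfaffian manipulations are routine.
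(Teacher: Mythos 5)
Your reduction is exactly the paper's: both pass through \eqref{eq:inner_pres} and Theorem~\ref{thm:GP} to reduce the claim to ${}_P\inner{\mu}{\lambda}_q=\delta_{\lambda,\mu}$. From that point on, however, you take a genuinely different route. The paper proves the orthonormality by a purely algebraic induction on the length of $\mu$: it first establishes three vanishing statements (Lemma~\ref{lemma:hojyo_1}) and then peels off one pair $(\Phi^{(\beta)}_{\mu_1})^\ast,\phi^{[\beta]}_{\lambda_1}$ at a time using the duality $[(\Phi^{(\beta)}_m)^\ast,\phi^{[\beta]}_n]_+=\delta_{m,n}$ and Corollary~\ref{cor:sublemma}; no generating functions, Pfaffians, or expansion domains ever enter. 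Your plan instead mimics the paper's generating-function proof of Theorem~\ref{thm:prev_main_theorem}: sweep the $e^{\pm\theta}$ onto the vacuum, apply Wick's theorem, and extract a coefficient of a Cauchy-type determinant. What your approach buys is a closed-form generating function for the pairing (close in spirit to how \eqref{eq:gq_gen} is derived later in the paper); what the paper's induction buys is that every zero-mode anomaly is handled by a one-line anticommutator computation rather than by tracking Laurent expansion orders.

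That said, as written your argument has its entire burden concentrated in the steps you defer, and two of them are not routine. First, the Pfaffian reduces to $\det\bigl(\langle(\Phi^{(\beta)}(w_i))^\ast\phi^{[\beta]}(z_j)\rangle\bigr)$ only because the $(\Phi^{(\beta)})^\ast$-self block dies under coefficient extraction (the $\phi^{[\beta]}$-self block $\frac{z_i-z_j}{z_i\oplus z_j}$ does \emph{not} vanish; it merely drops out of the Pfaffian once the other diagonal block is zero, since any matching using a $z$--$z$ edge must also use a $w$--$w$ edge) --- you should state this explicitly. Second, the anomalous terms are not confined to ``the $z^0$-direction'': for example $\langle(\Phi^{(\beta)}_0)^\ast\phi^{[\beta]}_0\rangle=\tfrac12$, so the cross contraction is $\frac{w z}{1-w z}$ plus a nontrivial series in nonpositive powers of both variables, and this matters precisely for the padded entries $\mu_s=0$ or $\lambda_r=0$ that your even-length normalization introduces. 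Finally, the identity $[w^\mu z^\lambda]\,S(\mathbf{w},\mathbf{z})\det\bigl(\frac{w_iz_j}{1-w_iz_j}\bigr)=\delta_{\lambda,\mu}$, with $S$ carrying the $\beta$-dependent factors $(1+\beta w_i^{-1})^{-k}$ and $(1+\beta z_j)^{\pm k}$ accumulated from the sweep, is the actual content of the theorem and is asserted rather than proved. None of these appears to be an obstruction of principle, but until they are carried out your proposal is a program rather than a proof; the paper's induction is the mechanism by which all three are verified simultaneously.
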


Let ${}_P\bra{\mu}:=(\ket{\mu}_P)^\ast$.
In order to prove Theorem \ref{thm:gq}, it suffices to show the duality relation
\begin{equation}\label{eq:aim_to_show_1}
{}_P\inner{\mu}{\lambda}_q=\delta_{\lambda,\mu}.
\end{equation}
To this end, we introduce two auxiliary vectors $(\lambda|\in \mathcal{F}^\ast_{even}$ and $|\lambda)\in \mathcal{F}_{even}$ defined by
\[
\begin{aligned}
\mbra{\lambda}
&=
\begin{cases}
\bra{0}
e^\theta (\Phi^{(\beta)}_{\lambda_r})^\ast
\dots
e^\theta (\Phi^{(\beta)}_{\lambda_2})^\ast
e^\theta (\Phi^{(\beta)}_{\lambda_1})^\ast & (\lambda_r>0),\\
\bra{0}
e^\theta (\phi^{(\beta)}_{0})^\ast
e^\theta (\Phi^{(\beta)}_{\lambda_{r-1}})^\ast
\dots
e^\theta (\Phi^{(\beta)}_{\lambda_2})^\ast
e^\theta (\Phi^{(\beta)}_{\lambda_1})^\ast & (\lambda_r=0),
\end{cases}\\
\mket{\lambda}
&=
\phi^{[\beta]}_{\lambda_1}e^{-\theta}
\phi^{[\beta]}_{\lambda_2}e^{-\theta}
\dots
\phi^{[\beta]}_{\lambda_r}e^{-\theta}\ket{0}
\end{aligned}
\]
for a strictly decreasing sequence $\lambda=(\lambda_1>\lambda_2>\dots>\lambda_r\geq 0)$.
The desired equation \eqref{eq:aim_to_show_1} is therefore equivalent to
\begin{equation}\label{eq:to_prove_1}
(\mu|\lambda)=\delta_{\lambda,\mu}.
\end{equation}

\begin{lemma}\label{lemma:hojyo_1}
We have the following relations:
\begin{itemize}
\item[(A)]
When $n>0$, we have $\bra{0}e^{\theta}(\phi_0^{(\beta)})^\ast\cdot \phi^{[\beta]}_n=0$.
\item[(B)]
When $\mu\neq \emptyset$ and $n>\mu_1$, we have $\mbra{\mu}\phi^{[\beta]}_n=0$.
\item[(C)]
When $\lambda\neq \emptyset$ and $m>\lambda_1$, we have
$\Phi^{(\beta)}_m\mket{\lambda}=0$.
\end{itemize}
\end{lemma}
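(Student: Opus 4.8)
The three assertions share a common shape---``a distinguished $\beta$-deformed operator annihilates a vacuum-based string''---so my plan is to prove each by transporting that operator across the string one factor at a time, alternately invoking an anti-commutation relation to hop over a fermion and a conjugation rule to hop over an exponential, until it reaches a vacuum where an annihilation rule finishes the job. The workhorses are Lemmas~\ref{lemma:basic_anti_commutation} and \ref{lemma:duality} (which turn each fermion-crossing into a Kronecker symbol plus a sign flip) and Corollary~\ref{cor:sublemma}: the rule $e^{\theta}\phi^{[\beta]}_ne^{-\theta}=\phi^{[\beta]}_n+\beta\phi^{[\beta]}_{n-1}$ lowers a $[\beta]$-index by at most one each time an $e^{\theta}$ is crossed, and the companion rule $e^{\theta}(\phi_n^{(\beta)})^\ast e^{-\theta}=(\phi_n^{(\beta)}-\beta\phi_{n+1}^{(\beta)}+\cdots)^\ast$ does the same for the starred $(\beta)$-operators. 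The recurring point, and where I expect the real work to sit, is \emph{index bookkeeping}: after all the index-lowering I must verify that the surviving operators still lie strictly above the thresholds $\mu_i$ (resp. $0$), so that every Kronecker symbol from Lemma~\ref{lemma:duality} and every boundary pairing $\bra0\phi^{[\beta]}_m$ with $m>0$ vanishes.

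For (A) I would first isolate the clean identity $e^{\theta}(\phi^{(\beta)}_0)^{\ast}e^{-\theta}=\phi^{[\beta]}_0$. Indeed Corollary~\ref{cor:sublemma} gives $e^{\theta}(\phi^{(\beta)}_0)^{\ast}e^{-\theta}=\big(\sum_{k\ge0}(-\beta)^k\phi^{(\beta)}_k\big)^{\ast}$, and the binomial collapse $\sum_{k\ge0}(-\beta)^k\phi^{(\beta)}_k=\sum_{n\ge0}(-\tfrac{\beta}{2})^n\phi_n$ identifies the adjoint with $\sum_{n\ge0}(\tfrac{\beta}{2})^n\phi_{-n}=\phi^{[\beta]}_0$. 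Hence $\bra0e^{\theta}(\phi^{(\beta)}_0)^{\ast}=\bra0\phi^{[\beta]}_0e^{\theta}$, and pushing $e^{\theta}$ past $\phi^{[\beta]}_n$ reduces (A) to evaluating $\bra0\phi^{[\beta]}_0(\phi^{[\beta]}_n+\beta\phi^{[\beta]}_{n-1})$. A short computation from the basic anti-commutators gives $\bra0\phi^{[\beta]}_0\phi^{[\beta]}_m=(-1)^m\beta^m\bra0$ for all $m\ge0$ (the case $m=0$ being $(\phi^{[\beta]}_0)^2=1$), whence the two terms cancel for every $n\ge1$. This is exactly the base case that will later rescue the odd-length boundary of (B).

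For (B) I would push $\phi^{[\beta]}_n$ leftward through $\mbra\mu=\bra0e^{\theta}(\Phi^{(\beta)}_{\mu_s})^{\ast}\cdots e^{\theta}(\Phi^{(\beta)}_{\mu_1})^{\ast}$, crossing each $(\Phi^{(\beta)}_{\mu_i})^{\ast}$ by Lemma~\ref{lemma:duality} and each $e^{\theta}$ by the lowering rule above. When the operator reaches $(\Phi^{(\beta)}_{\mu_i})^{\ast}$ it has crossed $i-1$ exponentials, so its live indices lie in $\{n,n-1,\dots,n-i+1\}$; the strict decrease $\mu_1>\cdots>\mu_s$ together with $n>\mu_1$ forces $n-i+1>\mu_i$, so every $\delta_{\mu_i,\bullet}$ vanishes and the operator passes freely up to sign. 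It therefore reaches the left end as a combination of $\phi^{[\beta]}_m$ whose indices are still positive (using $\mu_1\ge s$), where $\bra0\phi^{[\beta]}_m=0$ closes the $\mu_s>0$ case by \eqref{eq:ann_rule}, while in the $\mu_s=0$ case the leading block $\bra0e^{\theta}(\phi^{(\beta)}_0)^{\ast}$ absorbs them by part (A). The entire content is the index estimate guaranteeing $m>0$ at the boundary.

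Statement (C) is the mirror image: transport the operator rightward through $\mket\lambda=\phi^{[\beta]}_{\lambda_1}e^{-\theta}\cdots\phi^{[\beta]}_{\lambda_r}e^{-\theta}\ket0$, clearing each $\phi^{[\beta]}_{\lambda_i}$ by Lemma~\ref{lemma:duality} (with $\delta_{m,\lambda_i}=0$ because the live indices stay above $\lambda_1$) and each $e^{-\theta}$ by the $\theta$-conjugation of Corollary~\ref{cor:sublemma}, arriving at $\ket0$ where the annihilation rule $\bra0\Phi^{(\beta)}_m=0$ of \eqref{eq:ann_Phi}, read through the involution $\ast$, yields $0$ for $m>0$. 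Here lies the genuine subtlety, and what I regard as the main obstacle: only the \emph{starred} operator $(\Phi^{(\beta)}_m)^{\ast}$ both conjugates cleanly through $e^{-\theta}$ (the natural pairings being $\phi^{(\beta)}$ with $\Theta$ and $\phi^{[\beta]}$ with $\theta$) and annihilates $\ket0$, whereas the unstarred $\Phi^{(\beta)}_m$ does neither. The argument must therefore be run with the adjoint operator and the $\theta$-rule of Corollary~\ref{cor:sublemma}, and special care is required at the $\lambda_r=0$ boundary, which is precisely where part (A) is invoked once more.
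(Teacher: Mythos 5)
Your proof is correct, and for parts (B) and (C) it follows essentially the paper's route: the paper organizes the leftward transport in (B) as an induction on $s$, peeling off one block $e^\theta(\Phi^{(\beta)}_{\mu_1})^\ast$ at a time with exactly your index estimate, and proves (C) by normal-ordering $\mket{\lambda}$ into monomials $\phi^{[\beta]}_{n_1}\cdots\phi^{[\beta]}_{n_r}\ket{0}$ with all $n_i<m$ via Corollary \ref{cor:sublemma}(2) rather than transporting the operator — the same bookkeeping either way. Your insistence that (C) be read for the adjoint $(\Phi^{(\beta)}_m)^\ast$ is the correct reading: that is what the proof of Theorem \ref{thm:gq} actually invokes, and the unstarred version would be false, since for $k\geq m>0$ the operator $\phi^{(\beta)}_k$ creates rather than annihilates on $\ket{0}$. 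Where you genuinely diverge is (A): the paper splits into $n=1$ (where $[(\phi^{(\beta)}_0)^\ast,\phi^{[\beta]}_1]_+=\beta$ produces two terms that cancel after conjugating by $e^\theta$) and $n>1$ (where the anticommutator vanishes and \eqref{eq:ann_rule} finishes), whereas you prove the uniform identities $e^\theta(\phi^{(\beta)}_0)^\ast e^{-\theta}=\phi^{[\beta]}_0$ and $\bra{0}\phi^{[\beta]}_0\phi^{[\beta]}_m=(-\beta)^m\bra{0}$ and read off the cancellation $(-\beta)^n+\beta(-\beta)^{n-1}=0$ for every $n\geq1$ at once. Both auxiliary identities check out: $\sum_{k\geq 0}(-\beta)^k\phi^{(\beta)}_k=\sum_{j\geq 0}(-\tfrac{\beta}{2})^j\phi_j$, whose adjoint is $\sum_{j\geq 0}(\tfrac{\beta}{2})^j\phi_{-j}=\phi^{[\beta]}_0$; and since $\bra{0}\phi^{[\beta]}_0\phi_j=2(-\tfrac{\beta}{2})^j\bra{0}$ for $j\geq 1$, the covector $\bra{0}\phi^{[\beta]}_0\phi^{[\beta]}_m$ is a multiple of $\bra{0}$ whose scalar is the $z^0w^m$-coefficient in Lemma \ref{lemma:commutation_rels}(5), namely $(-\beta)^m$. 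So your (A) is a clean, uniform alternative. The only cosmetic slip is the remark that part (A) is needed again at the $\lambda_r=0$ boundary of (C); it is not, since there the Kronecker delta $\delta_{m,0}=0$ already lets the starred operator pass through $\phi^{[\beta]}_0$ onto $\ket{0}$.
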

\begin{proof}
(A):
When $n=1$, we have 
\begin{align*}
\bra{0}e^{\theta}(\phi_0^{(\beta)})^\ast\cdot \phi^{[\beta]}_1
&=
\bra{0}e^{\theta}\left\{
[(\phi_0^{(\beta)})^\ast,\phi^{[\beta]}_1 ]_+-\phi^{[\beta]}_1(\phi_0^{(\beta)})^\ast
\right\}\\
&
=
\beta\cdot \bra{0}e^{\theta}-\bra{0}e^{\theta}\phi^{[\beta]}_1(\phi^{(\beta)}_0)^\ast
\qquad (\mathrm{Lemma\   \ref{lemma:basic_anti_commutation}})\\
&
=
\beta\cdot \bra{0}e^{\theta}-\bra{0}(\phi^{[\beta]}_1+\beta \phi^{[\beta]}_0)(\phi^{(\beta)}_0-\beta\phi^{(\beta)}_1+\beta^2\phi^{(\beta)}_2-\cdots)^\ast e^\theta
\qquad (\text{Corollary~\ref{cor:sublemma}})
\\
&=
\beta\cdot \bra{0}e^{\theta}-
\beta\cdot \bra{0}e^{\theta}\qquad (\mathrm{Eqs.~\eqref{eq:ann_rule},\eqref{eq:vs_phi_0}} )\\
&=0.
\end{align*}
When $n>1$, (A) is proved immediately from $[(\phi^{(\beta)}_0)^\ast,\phi^{[\beta]}_n]_+=0$ and the annihilation rule \eqref{eq:ann_rule}.

(B):
The claim (B) is shown by induction on $s\geq 1$.
When $s=1$ and $\mu_1=0$, (B) is nothing but (A).
When $s=1$ and $\mu_1>0$, we have $n>1$ and
\[
\begin{aligned}
\mbra{\mu}\phi^{[\beta]}_n
&=
\bra{0}e^\theta (\Phi^{(\beta)}_{\mu_1})^\ast\phi^{[\beta]}_n
=
-\bra{0}e^\theta \phi^{[\beta]}_n(\Phi^{(\beta)}_{\mu_1})^\ast
\end{aligned}
\]
from Lemma \ref{lemma:duality}.
By Corollary \ref{cor:sublemma} and \eqref{eq:ann_rule}, we obtain
\[
\begin{aligned}
-\bra{0}e^\theta \phi^{[\beta]}_n(\Phi^{(\beta)}_{\mu_1})^\ast
=
-\bra{0}(\phi_n^{[\beta]}+\beta \phi^{[\beta]}_{n-1})
e^{\theta}
(\Phi^{(\beta)}_{\mu_1})^\ast=0,
\end{aligned}
\]
which implies (B).
For general $s>1$, let $\mu'=(\mu_2>\dots>\mu_s\geq 0)$.
Then, we have $n-1>\mu_2$ and
\[
\begin{aligned}
\mbra{\mu}\phi^{[\beta]}_n
&=
\mbra{\mu'}e^\theta (\Phi^{(\beta)}_{\mu_1})^\ast\phi^{[\beta]}_n
=
-\mbra{\mu'}e^\theta \phi^{[\beta]}_n(\Phi^{(\beta)}_{\mu_1})^\ast
=
-\mbra{\mu'}(\phi_n^{[\beta]}+\beta \phi^{[\beta]}_{n-1})
e^{\theta}
(\Phi^{(\beta)}_{\mu_1})^\ast=0,
\end{aligned}
\]
where the last equality follows from the induction hypothesis.

(C):
The claim (C) follows from the fact that $|\lambda)$ is expressed as a $\QQ(\beta)$-linear combination of vectors of the form
$\phi^{[\beta]}_{n_1}\phi^{[\beta]}_{n_2}\dots 
\phi^{[\beta]}_{n_r}\ket{0}$ with $m>n_1>n_2>\dots>n_r\geq 0$
(see Corollary \ref{cor:sublemma} (2)).
\end{proof}

\begin{proof}[Proof of Theorem \ref{thm:gq}]
Let $E=( \mu|\lambda)$.
Note that $E$ is automatically $0$ when $r+s$ is odd.

(i) 
When $(s,r)=(0,0)$, we have $E=\langle 0|0\rangle=1$.
When $(s,r)=(0,1)$, we have $E=0$ since $r+s$ is odd.
When $s=0$ and $r\geq 2$, we have $\lambda_1>0$.
Hence, by the annihilation rule \eqref{eq:ann_rule}, we have $E=(\emptyset|\lambda)=0$.

(ii) 
When $r=0$, we can show that $E=0$ in a similar manner to (i).

(iii)
For general $(s,r)$, we prove the theorem by induction on $s\geq 0$.
If $\mu_1<\lambda_1$, then $E=0$ by Lemma \ref{lemma:hojyo_1} (B). 
If $\mu_1>\lambda_1$, then $E=0$ by Lemma \ref{lemma:hojyo_1} (C).
Assume $\mu_1=\lambda_1$.
If $\mu_1=\lambda_1=0$, then we have
\[
\begin{aligned}
E
&=
\bra{0}e^{\theta}(\phi^{(\beta)}_0)^\ast \phi_0^{[\beta]}e^{-\theta}\ket{0}\\
&
=
\left\langle(\phi^{(\beta)}_0-\beta \phi^{(\beta)}_1+\beta^2\phi^{(\beta)}_2-\cdots )^\ast (\phi_0^{[\beta]}+\beta \phi_1^{[\beta]})
\right\rangle
\qquad (\text{Corollary~\ref{cor:sublemma}})
\\
&
=\left\langle
(\phi^{(\beta)}_0)^\ast \phi_0^{[\beta]}
\right\rangle\qquad (\text{Eq.~\eqref{eq:ann_rule}})\\
&=1\qquad (\text{Eq.~\eqref{eq:vs_phi_0}}).
\end{aligned}
\]
If $\mu_1=\lambda_1>0$, let 
$\lambda'=(\lambda_2>\dots>\lambda_r\geq 0)$ and
$\mu'=(\mu_2>\dots>\mu_s\geq 0)$.
Then, we have
\[
\begin{aligned}
E
=
(\mu|\lambda)
&=
\mbra{\mu'}
e^\theta (\Phi^{(\beta)}_{\mu_1})^\ast
\phi^{[\beta]}_{\lambda_1} e^{-\theta}
\mket{\lambda'}\\
&
=
\mbra{\mu'}
e^\theta 
\{
1-\phi^{[\beta]}_{\lambda_1} (\Phi^{(\beta)}_{\mu_1})^\ast
\}
e^{-\theta}
\mket{\lambda'}\qquad
(\mathrm{Lemma\ \ref{lemma:duality}})
\\
&=
(\mu'|\lambda')
-
\mbra{\mu'}
e^\theta 
\phi^{[\beta]}_{\lambda_1} (\Phi^{(\beta)}_{\mu_1})^\ast
e^{-\theta}
\mket{\lambda'}\\
&=
( \mu'|\lambda')
-
\mbra{\mu'}
e^\theta 
\phi^{[\beta]}_{\lambda_1} 
e^{-\theta}
(\Phi^{(\beta)}_{\mu_1}-\beta \Phi^{(\beta)}_{\mu_1+1}+\cdots)^\ast
\mket{\lambda'}\qquad (\mathrm{Corollary~\ref{cor:sublemma}}).
\end{aligned}
\]
The second term in the last expression is $0$ by Lemma \ref{lemma:hojyo_1} (C).
Hence, we have $E=(\mu'|\lambda').$
By induction hypothesis, we conclude that $E=\delta_{\mu,\lambda}$.
\end{proof}

\subsection{Generating function for $gq$}

As an application of the fermionic expression given in Theorem \ref{thm:gq}, we can derive a generating function of the $gq$-functions.
Let $gq_n=gq_{(n)}$ be the $gq$-function corresponding to the one-row partition $(n)$.
Let $gq(z)=\sum_{n=0}^\infty gq_n(x)z^n$ be the generating function of $gq_n$.

\begin{prop}
We have
\begin{equation}\label{eq:gen_gq_one_row}
gq(z)=\left\langle
e^{\mathcal{H}^{[\beta]}}\phi^{[\beta]}(z)\phi_0
\right\rangle
=\prod_i\frac{1-x_i\overline{z}}{1-x_iz}
\end{equation}
and
\begin{equation}\label{eq:generating_gq_gen}
e^{\mathcal{H}^{[\beta]}}\phi^{[\beta]}(z)e^{-\mathcal{H}^{[\beta]}}=gq(z)\cdot \phi^{[\beta]}(z).
\end{equation}
\end{prop}
\begin{proof}
This proposition immediately follows from Lemma \ref{lemma:commutation_rels} \eqref{item:2-4} and Theorem \ref{thm:gq}.
\end{proof}

We next derive a generating function and a Pfaffian formula for $gq_\lambda$ for general $\lambda$.
Let $r$ be an even integer.
By Theorem \ref{thm:gq}, the formal series
\[
\mathfrak{gq}(z_1,\dots,z_r)
:=\left\langle
e^{\mathcal{H}^{[\beta]}}
\phi^{[\beta]}(z_1)e^{-\theta}
\phi^{[\beta]}(z_2)e^{-\theta}
\dots
\phi^{[\beta]}(z_r)e^{-\theta} 
\right\rangle
\]
is a generating function of $gq_\lambda$ for $\ell(\lambda)\leq r$.
By Lemma~\ref{lemma:commutation_rels}, $\mathfrak{gq}(z_1,\dots,z_r)$ can be computed as follows:
\begin{align}
\mathfrak{gq}(z_1,\dots,z_r)\nonumber
&=
\prod_{i=1}^r\frac{1}{(1+\beta z_i)^{i-1}}
\left\langle
e^{\mathcal{H}^{[\beta]}}
\phi^{[\beta]}(z_1)
\phi^{[\beta]}(z_2)
\dots
\phi^{[\beta]}(z_r)
\right\rangle\nonumber
\qquad (\text{Lemma~\ref{lemma:commutation_rels}~\eqref{item:2-1},\eqref{item:2-2}})
\allowdisplaybreaks
\\
&=
\prod_{i=1}^r\frac{1}{(1+\beta z_i)^{i-1}}
\cdot \mathrm{Pf}\left(
\langle
e^{\mathcal{H}^{[\beta]}}
\phi^{[\beta]}(z_i)
\phi^{[\beta]}(z_j)
\rangle
\right)_{1\leq i<j\leq r}\nonumber
\qquad (\mathrm{Eq.~\eqref{eq:Wick}})
\allowdisplaybreaks
\\
&=
\prod_{i=1}^r\frac{1}{(1+\beta z_i)^{i-1}}
\cdot
\mathrm{Pf}\left(
gq(z_i)gq(z_j)
\langle
\phi^{[\beta]}(z_i)
\phi^{[\beta]}(z_j)
\rangle
\right)_{1\leq i<j\leq r}\nonumber
\qquad (\mathrm{Eq.~\eqref{eq:generating_gq_gen}})
\allowdisplaybreaks
\\
&=
\mathrm{Pf}\left(
\frac{gq(z_i)}{(1+\beta z_i)^{i-1}}
\frac{gq(z_j)}{(1+\beta z_j)^{j-1}}
\cdot 
\frac{z_i-z_j}{z_i\oplus z_j}
\right)_{1\leq i<j\leq r}.\label{eq:Pf_cont}
\end{align}
By \eqref{eq:IN-formula}, the Pfaffian \eqref{eq:Pf_cont} is rewritten as
\[
\prod_{i=1}^r\frac{gq(z_i)}{(1+\beta z_i)^{i-1}}
\cdot
\prod_{1\leq i<j\leq r}
\frac{z_i-z_j}{z_i\oplus z_j}=
\prod_{i=1}^r gq(z_i)
\cdot
\prod_{1\leq i<j\leq r}
\frac{z_i\ominus z_j}{z_i\oplus z_j}.
\]
Thus, we obtain
\begin{equation}\label{eq:gq_gen}
gq_\lambda=[z_1^{\lambda_1}\dots z_r^{\lambda_r}]
\prod_{i=1}^r gq(z_i)
\cdot
\prod_{1\leq i<j\leq r}
\frac{z_i\ominus z_j}{z_i\oplus z_j},
\end{equation}
where the rational function on the right hand side is regarded as an element of \[
\QQ(\beta)[x_1,x_2,\dots]((z_1))\cdots ((z_r)).
\]

The generating function \eqref{eq:gq_gen} was conjectured in \cite[Conjecture 5.3]{nakagawa2023universal}.
Since \eqref{eq:gq_gen} admits a substitution $\lambda_r=0$, we immediately obtain $gq_{(\lambda,0)}=gq_{\lambda}$.

\begin{cor}
Let $\lambda$ be a strict partition, and $r$ be the smallest even integer greater than or equal to $\ell(\lambda)$.
Then, the $gq$-function admits the following Pfaffian formula:
\[
gq_\lambda(x)=\mathrm{Pf}\left(
\sum_{v=0}^{\lambda_j}
\sum_{u=-v}^{\lambda_i} c^{(i,j)}_{u,v}gq_{\lambda_i-u}(x)\cdot gq_{\lambda_j-v}(x)
\right)_{1\leq i<j\leq r},
\]
where
\[
\frac{1}{(1+\beta t)^{i-1}}\frac{1}{(1+\beta s)^{j-1}}\frac{t- s}{t\oplus s}
=
\sum_{v=0}^\infty 
\sum_{u=-v}^\infty 
c^{(i,j)}_{u,v}t^us^v
\]
be the formal expansion in the field $\QQ(\beta)((t))((s))$.
\end{cor}
\begin{proof}
When expanding the Pfaffian \eqref{eq:Pf_cont} using the definition \eqref{eq:Pfaffian_def}, each variable $z_i$ appears exactly once in every term.
Therefore, we have 
\begin{align*}
gq_\lambda(x)&=[z_1^{\lambda_1}\dots z_r^{\lambda_r}]
\mathrm{Pf}\left(
\frac{gq(z_i)}{(1+\beta z_i)^{i-1}}
\frac{gq(z_j)}{(1+\beta z_j)^{j-1}}
\cdot 
\frac{z_i-z_j}{z_i\oplus z_j}
\right)_{1\leq i<j\leq r}\allowdisplaybreaks\\
&=
\mathrm{Pf}\left([z_1^{\lambda_1}\dots z_r^{\lambda_r}]
\frac{gq(z_i)}{(1+\beta z_i)^{i-1}}
\frac{gq(z_j)}{(1+\beta z_j)^{j-1}}
\cdot 
\frac{z_i-z_j}{z_i\oplus z_j}
\right)_{1\leq i<j\leq r},
\end{align*}
which completes the proof.
\end{proof}

\subsection{Fermionic description of $gp$}
\label{sec:fermionic_pre_of_gp}

One might expect that the $gp$-functions also admit an expression similar to that of the $gq$-functions.
However, the fermionic expression of $gp_\lambda$ turns out to significantly more complicated.
This is due to the fact that, while the vector $\bra{0}\phi_n^{[\beta]}$ vanishes for $n>0$, $\bra{0}\Phi_n^{[\beta]}$ does not, as shown in \eqref{eq:lack_of_ann_rule}.

Let 
\begin{equation}\label{eq:gp_prime}
gp'_\lambda=
\begin{cases}
\langle
e^{\mathcal{H}^{[\beta]}}
\Phi^{[\beta]}_{\lambda_1}e^{-\theta}
\Phi^{[\beta]}_{\lambda_2}e^{-\theta}
\dots
\Phi^{[\beta]}_{\lambda_r}e^{-\theta}
\rangle & (r:\mbox{even}),\\
\langle
e^{\mathcal{H}^{[\beta]}}
\Phi^{[\beta]}_{\lambda_1}e^{-\theta}
\Phi^{[\beta]}_{\lambda_2}e^{-\theta}
\dots
\Phi^{[\beta]}_{\lambda_r}e^{-\theta}
\phi^{[\beta]}_{0}e^{-\theta}
\rangle & (r:\mbox{odd}).
\end{cases}
\end{equation}
By analogy with Theorem \ref{thm:gq}, one might expect that $gp'_\lambda$ coincides with $gp_\lambda$.
However, this is not the case.
In fact, we have
\begin{equation}\label{eq:Fal}
\langle
GQ_{\lambda},gp'_\mu
\rangle\neq \delta_{\lambda,\mu}
\end{equation}
if $\lambda\neq \emptyset$ or $\mu\neq \emptyset$.
For example, when $\lambda=\emptyset$ and $\mu=(n)$, we have
\[
\langle
GQ_\emptyset,gp_n'
\rangle
=
\left\langle
e^\theta (\phi_0^{(\beta)})^\ast
\Phi^{[\beta]}_ne^{-\theta}
\right\rangle
=
\frac{(-\beta)^n}{2^{n+1}}
\left\langle
e^\theta (\phi_0^{(\beta)})^\ast
\phi^{[\beta]}_0e^{-\theta}
\right\rangle
=
\frac{(-\beta)^n}{2^{n+1}}
\neq 0
\]
as follows from \eqref{eq:lack_of_ann_rule}.

In order to obtain a correct expression for $gp_\lambda$, we introduce a new vector $\ket{\lambda}^+_p\in \mathcal{F}$ defined by
\begin{equation}\label{eq:ket_p}
\ket{\lambda}^+_p
=
\left(\Phi^{[\beta]}_{\lambda_1}-\tfrac{1}{2}(-\tfrac{\beta}{2})^{\lambda_1}\right)e^{-\theta}
\left(\Phi^{[\beta]}_{\lambda_2}-\tfrac{1}{2}(-\tfrac{\beta}{2})^{\lambda_2}\right)e^{-\theta}
\cdots
e^{-\theta}
\left(\Phi^{[\beta]}_{\lambda_r}-\tfrac{1}{2}(-\tfrac{\beta}{2})^{\lambda_r}\right)(\phi_0+1)\ket{0}
\end{equation}
for a strict partition $\lambda=(\lambda_1>\dots>\lambda_r>0)$.
Note that $\ket{\lambda}^+_p$ is not contained in $\mathcal{F}_{even}$.

Recall that the Fock space $\mathcal{F}$ is uniquely decomposed as  $\mathcal{F}=\mathcal{F}_{odd}\oplus \mathcal{F}_{even}$ into the odd and the even subspaces.
Let $\pi_{e}:\mathcal{F}\to \mathcal{F}_{even}$ be the projection to the even part along to the decomposition.
Then, we define the vector $\ket{\lambda}_p\in \mathcal{F}_{even}$ by 
\[
\ket{\lambda}_p:=\pi_e\left(\ket{\lambda}^+_p\right).
\]

\begin{thm}\label{thm:fermion_of_gp}
For a strict partition $\lambda=(\lambda_1>\dots>\lambda_r>0)$, we have
\begin{equation}\label{eq:def_of_gp}
\begin{aligned}
gp_\lambda(x)=\bra{0}e^{\mathcal{H}^{[\beta]}}\ket{\lambda}_p.
\end{aligned}
\end{equation}
\end{thm}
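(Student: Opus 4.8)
The plan is to reduce the statement to a purely fermionic orthonormality relation and then prove that relation by an induction that mirrors the proof of Theorem~\ref{thm:gq}, the genuinely new feature being the ``anomaly'' $\bra{0}\Phi^{[\beta]}_n\neq 0$. First I would note that $\bra{0}e^{\mathcal{H}^{[\beta]}}\ket{\lambda}_p$ is in fact an element of $g\Gamma$: since $e^{\mathcal{H}^{[\beta]}}$ is built from the even operators $b^{[\beta]}_n$ and $\bra{0}$ is even, only the even part $(\ket{\lambda}_p)_{even}\in\mathcal{F}_{even}$ of $\ket{\lambda}_p$ contributes, so $\bra{0}e^{\mathcal{H}^{[\beta]}}\ket{\lambda}_p=\chi\big((\ket{\lambda}_p)_{even}\big)$. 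Because $gp_\lambda$ is the unique element of $g\Gamma$ satisfying $\langle GQ_\mu,gp_\lambda\rangle=\delta_{\lambda,\mu}$ for all strict $\mu$, it suffices to verify $\langle GQ_\mu,\chi((\ket{\lambda}_p)_{even})\rangle=\delta_{\lambda,\mu}$. Writing $GQ_\mu=\Omega(\ket{\mu}_Q)$ (Theorem~\ref{thm:prev_main_theorem}) and invoking the inner-product preservation \eqref{eq:inner_pres} converts this into the fermionic identity
\[
{}_Q\inner{\mu}{\lambda}_p=\delta_{\lambda,\mu},\qquad
{}_Q\bra{\mu}:=(\ket{\mu}_Q)^\ast
=\bra{0}e^\theta(\phi^{(\beta)}_{\mu_s})^\ast\cdots e^\theta(\phi^{(\beta)}_{\mu_1})^\ast
\]
(for even $s$; an extra $e^\theta(\phi^{(\beta)}_0)^\ast$ appears for odd $s$), where the evenness of ${}_Q\bra{\mu}$ automatically supplies the even projection.

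The main tool is the adjoint of Lemma~\ref{lemma:duality}: applying $\ast$ to $[(\Phi^{[\beta]}_m)^\ast,\phi^{(\beta)}_n]_+=\delta_{m,n}$ yields $[(\phi^{(\beta)}_m)^\ast,\Phi^{[\beta]}_n]_+=\delta_{m,n}$, the exact counterpart of the duality that drove Theorem~\ref{thm:gq} but with the two fermion families exchanged. I would then establish annihilation lemmas analogous to Lemma~\ref{lemma:hojyo_1}. Commuting $(\phi^{(\beta)}_m)^\ast$ rightward through the factors $\widetilde\Phi^{[\beta]}_{\lambda_i}:=\Phi^{[\beta]}_{\lambda_i}-\tfrac12(-\tfrac\beta2)^{\lambda_i}$ and the intervening $e^{-\theta}$ (via Corollary~\ref{cor:sublemma}, which only raises fermion modes) shows that $(\phi^{(\beta)}_m)^\ast$ annihilates $\ket{\lambda}_p$ whenever $m>\lambda_1$, since it eventually reaches the boundary $(\phi_0+1)\ket{0}$, which is killed by any positive-mode $(\phi^{(\beta)}_{m'})^\ast$; a dual argument disposes of the case $\mu_1>\lambda_1$.

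With these in hand I would induct on $s=\ell(\mu)$. The off-diagonal cases $\mu_1\neq\lambda_1$ vanish by the annihilation lemmas, while the diagonal case $\mu_1=\lambda_1$ uses the $\delta_{\mu_1,\lambda_1}$ produced by the anticommutator to peel off a matched pair, reducing ${}_Q\inner{\mu}{\lambda}_p$ to the pairing of the truncated partitions, with the remaining non-diagonal terms killed again by the annihilation lemmas. This is the exact combinatorial skeleton of the proof of Theorem~\ref{thm:gq}, and the induction closes once the base cases ($\mu=\emptyset$ or one partition exhausted) are settled.

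The hard part will be the anomaly. In the $gq$ computation the clean rule $\bra{0}\phi^{[\beta]}_n=0$ made every boundary contribution vanish on its own; here the shadow of \eqref{eq:lack_of_ann_rule} is that $\Phi^{[\beta]}_n\ket{0}$ carries the spurious term $\tfrac12(-\tfrac\beta2)^n\phi_0\ket{0}$, which is precisely what makes the naive candidate $gp'_\lambda$ fail in \eqref{eq:Fal}. The scalar shifts $-\tfrac12(-\tfrac\beta2)^{\lambda_i}$ together with the modified boundary $(\phi_0+1)\ket{0}$ are engineered to cancel these contributions, and I expect the principal difficulty to be verifying that this cancellation is stable under the induction. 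The mechanism is already transparent in the base case $\mu=\emptyset$, where, using $\phi_0^2=1$ and $\bra{0}\phi_0\ket{0}=0$,
\[
\bra{0}\big(\Phi^{[\beta]}_n-\tfrac12(-\tfrac\beta2)^n\big)(\phi_0+1)\ket{0}
=\tfrac12(-\tfrac\beta2)^n\bra{0}\phi_0(\phi_0+1)\ket{0}-\tfrac12(-\tfrac\beta2)^n\bra{0}(\phi_0+1)\ket{0}
=0,
\]
so the orthonormality holds at the bottom of the recursion. The inductive step then requires showing that the $e^{-\theta}$ factors spread these corrections across modes (again through Corollary~\ref{cor:sublemma}) without disturbing the term-by-term cancellation, so that at every stage the shift attached to $\widetilde\Phi^{[\beta]}_{\lambda_i}$ exactly absorbs the anomalous $\phi_0$-contribution created at the boundary.
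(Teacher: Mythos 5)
Your plan follows the paper's proof essentially step for step: the reduction to ${}_Q\inner{\mu}{\lambda}_p=\delta_{\lambda,\mu}$ via \eqref{eq:inner_pres}, the dualized anticommutator $[(\phi^{(\beta)}_m)^\ast,\Phi^{[\beta]}_n]_+=\delta_{m,n}$, the two annihilation lemmas, the induction on $\ell(\mu)$ with the diagonal case peeled off by the anticommutator, and the base-case cancellation at $(\phi_0+1)\ket{0}$ are exactly Lemma \ref{lemma:hojyo2} and the printed argument, the only cosmetic difference being that the paper transfers $(\phi_0+1)$ to the bra side via $\langle(\phi_0+1)X\rangle=\langle X(\phi_0+1)\rangle$. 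The ``stability under the induction'' you flag as the remaining difficulty is precisely the content of the paper's Lemma \ref{lemma:hojyo2}(B), where the identity $-\Phi^{[\beta]}_n-\beta\Phi^{[\beta]}_{n-1}-\tfrac12(-\tfrac\beta2)^{n}=-(\Phi^{[\beta]}_n-\tfrac12(-\tfrac\beta2)^{n})-\beta(\Phi^{[\beta]}_{n-1}-\tfrac12(-\tfrac\beta2)^{n-1})$ shows that the shifted operators are closed under conjugation by $e^{\theta}$, so the cancellation does persist.
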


Recall that, when $\bra{v}\in \mathcal{F}^\ast_{even}$ and $\ket{w}\in \mathcal{F}_{odd}$, the expectation value $\inner{v}{w}$ vanishes automatically.
Then, we have
$
\langle v|\lambda\rangle_p=\langle v|\lambda\rangle_p^+
$
for all $\bra{v}\in \mathcal{F}^\ast_{even}$.
Hence, in order to prove Theorem \ref{thm:fermion_of_gp}, it suffices to show
\begin{equation}\label{eq:to_prove}
{}_{Q}\langle
\kappa|\lambda
\rangle^+_p
=\delta_{\lambda,\kappa}\qquad \text{for all strict partitions $\kappa$},
\end{equation}
which is equivalent to ${}_{Q}\langle
\kappa|\lambda
\rangle_p
=\delta_{\lambda,\kappa}$.

For a strict partition $\lambda=(\lambda_1>\dots>\lambda_r>0)$ and a decreasing sequence
$\mu=(\mu_1>\dots>\mu_s\geq 0)$,
we introduce two auxiliary vectors $\bbra{\mu}$ and $\kket{\lambda}$ defined by
\[
\begin{aligned}
&
\bbra{\mu}=\bra{0}(\phi_0+1)
e^\theta (\phi^{(\beta)}_{\mu_s})^\ast
\dots
e^\theta (\phi^{(\beta)}_{\mu_2})^\ast
e^\theta (\phi^{(\beta)}_{\mu_1})^\ast,\\
&
\kket{\lambda}=
\left(\Phi^{[\beta]}_{\lambda_1}-\tfrac{1}{2}(-\tfrac{\beta}{2})^{\lambda_1}\right)e^{-\theta}
\left(\Phi^{[\beta]}_{\lambda_2}-\tfrac{1}{2}(-\tfrac{\beta}{2})^{\lambda_2}\right)e^{-\theta}
\cdots
\left(\Phi^{[\beta]}_{\lambda_r}-\tfrac{1}{2}(-\tfrac{\beta}{2})^{\lambda_r}\right)\ket{0}.
\end{aligned}
\]
Since
\[
\left\langle (\phi_0+1)X\right\rangle
=
\left\langle X(\phi_0+1)\right\rangle,\qquad
\text{for all }X\in \mathcal{A}
\]
and
\[
{}_Q\bra{\kappa}=
(\ket{\kappa}_Q)^\ast=
\begin{cases}
\bra{0}e^\theta (\phi^{(\beta)}_{\kappa_t})^\ast
\dots
e^\theta (\phi^{(\beta)}_{\kappa_2})^\ast
e^\theta (\phi^{(\beta)}_{\kappa_1})^\ast& (\text{if }t=\ell(\kappa)\mbox{ is even}),\\
\bra{0}e^\theta (\phi^{(\beta)}_{0})^\ast e^\theta (\phi^{(\beta)}_{\kappa_t})^\ast
\dots
e^\theta (\phi^{(\beta)}_{\kappa_2})^\ast
e^\theta (\phi^{(\beta)}_{\kappa_1})^\ast  & (\text{if }t=\ell(\kappa)\mbox{ is odd}),
\end{cases}
\]
which is obtained from \eqref{eq:vector_Q}, we have
\[
{}_{Q}\langle
\kappa|\lambda
\rangle^+_p=
\begin{cases}
\iinner{\kappa}{\lambda} & (\ell(\kappa)\text{ is even} ),\\
\iinner{(\kappa,0)}{\lambda} & (\ell(\kappa)\text{ is odd} ).
\end{cases}
\]
Hence, to show \eqref{eq:to_prove}, it suffices to verity
\begin{equation}\label{eq:mokuhyou}
\iinner{\mu}{\lambda}=
\begin{cases}
1 & (\mu=\lambda \text{ or } \mu=(\lambda,0)),\\
0 & (\text{otherwise}).
\end{cases}
\end{equation}

\begin{lemma}\label{lemma:hojyo2}
We have the following relations:
\begin{enumerate}
\item[(A).] When $\mu=\emptyset$ and $n\geq 0$, we have $\bbra{\emptyset}(\Phi^{[\beta]}_n-\tfrac{1}{2}(-\tfrac{\beta}{2})^{n})=0$.
\item[(B).] When $\mu\neq \emptyset$ and $n> \mu_1$, we have $\bbra{\mu}(\Phi^{[\beta]}_n-\tfrac{1}{2}(-\tfrac{\beta}{2})^{n})=0$.
\item[(C).] When $\lambda\neq \emptyset$ and $m>\lambda_1$, we have $(\phi^{(\beta)}_m)^\ast \kket{\lambda}=0$.
\end{enumerate}
\end{lemma}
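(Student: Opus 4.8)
The plan is to prove (A), (B), (C) by the same leftward/rightward commutation mechanism that drove Lemma~\ref{lemma:hojyo_1}, now adapted to the shifted operators $\widetilde{\Phi}_n:=\Phi^{[\beta]}_n-\tfrac12(-\tfrac\beta2)^n$. Writing $s_n:=\tfrac12(-\tfrac\beta2)^n$, the statements (A) and (B) are both equivalent to the single assertion $\bbra{\mu}\Phi^{[\beta]}_n=s_n\bbra{\mu}$ (for $\mu=\emptyset,\ n\ge0$ in (A); for $n>\mu_1$ in (B)), since $\frac12(-\frac\beta2)^n$ is a central scalar. The first thing I would record is the arithmetic fact that the $s_n$ obey the one-step recursion $s_n=-\tfrac\beta2\,s_{n-1}$. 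Two operator identities will be used throughout: the anti-commutator $[(\phi^{(\beta)}_m)^\ast,\Phi^{[\beta]}_n]_+=\delta_{m,n}$, obtained by taking the $\ast$-adjoint of Lemma~\ref{lemma:duality} (using $(AB+BA)^\ast=[A^\ast,B^\ast]_+$ and $\delta_{m,n}^\ast=\delta_{m,n}$); and the conjugation rule $e^{\theta}\Phi^{[\beta]}_n e^{-\theta}=\Phi^{[\beta]}_n+\beta\Phi^{[\beta]}_{n-1}$, which follows from $\Phi^{[\beta]}(z)=\frac{1}{2+\beta z}\phi^{[\beta]}(z)$ (Definition~\ref{def:Phi}) together with $e^{\theta}\phi^{[\beta]}(z)e^{-\theta}=(1+\beta z)\phi^{[\beta]}(z)$, the inverse of Lemma~\ref{lemma:commutation_rels}~\eqref{item:2-2}.

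For (A) I would use the mode expansion \eqref{eq:Phi_expand}, writing $\Phi^{[\beta]}_n=\Psi_n+s_n\phi_0$, where $\Psi_n$ is a $\QQ(\beta)$-combination of positive-mode fermions $\phi^{[\beta]}_k$ with $1\le k\le n$. Then $\bra0\phi^{[\beta]}_k=0$ and $\bra0\phi_0\phi^{[\beta]}_k=-\bra0\phi^{[\beta]}_k\phi_0=0$ for $k>0$ (each $\phi^{[\beta]}_k$ being linear in $\phi_1,\dots,\phi_k$, which anticommute with $\phi_0$), so $\bbra{\emptyset}\Psi_n=\bra0(\phi_0+1)\Psi_n=0$; while $\bbra{\emptyset}\,s_n\phi_0=s_n\bra0(\phi_0^2+\phi_0)=s_n\bra0(1+\phi_0)=s_n\bbra{\emptyset}$ since $\phi_0^2=1$. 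Adding gives $\bbra{\emptyset}\Phi^{[\beta]}_n=s_n\bbra{\emptyset}$, which is (A). This is precisely where the shift $-s_n$ is forced: it is tailored to absorb the residual $\phi_0$-term of \eqref{eq:lack_of_ann_rule}.

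For (B) I would induct on $s=\ell(\mu)$. With $\mu'=(\mu_2>\dots>\mu_s)$ and $n>\mu_1$, pushing $\Phi^{[\beta]}_n$ leftward through the rightmost factor $e^{\theta}(\phi^{(\beta)}_{\mu_1})^\ast$ of $\bbra{\mu}=\bbra{\mu'}e^{\theta}(\phi^{(\beta)}_{\mu_1})^\ast$, using the adjoint duality (so $\delta_{\mu_1,n}=0$) and then the conjugation rule, yields
\[
\bbra{\mu}\Phi^{[\beta]}_n
=-\bbra{\mu'}\bigl(\Phi^{[\beta]}_n+\beta\Phi^{[\beta]}_{n-1}\bigr)e^{\theta}(\phi^{(\beta)}_{\mu_1})^\ast.
\]
Applying (A) when $s=1$, or the induction hypothesis when $s>1$ (legitimate because $n>\mu_1>\mu_2$ forces both $n,n-1>\mu_2$), replaces the bracket by the scalar $-(s_n+\beta s_{n-1})$. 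The recursion $s_n=-\tfrac\beta2 s_{n-1}$ now gives $-(s_n+\beta s_{n-1})=-\tfrac\beta2 s_{n-1}=s_n$, whence $\bbra{\mu}\Phi^{[\beta]}_n=s_n\bbra{\mu}$, closing the induction. I expect this scalar bookkeeping to be the main obstacle: the shift must be a simultaneous fixed point of the conjugation (which mixes modes $n$ and $n-1$) and of the sign coming from anti-commutation, and it is exactly the relation $s_n=-\tfrac\beta2 s_{n-1}$ that makes the two effects cancel.

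For (C) I would induct on $r=\ell(\lambda)$, commuting $(\phi^{(\beta)}_m)^\ast$ rightward through $\kket{\lambda}=\widetilde{\Phi}_{\lambda_1}e^{-\theta}\kket{\lambda'}$. Since $m>\lambda_1$, the adjoint duality gives $(\phi^{(\beta)}_m)^\ast\Phi^{[\beta]}_{\lambda_1}=-\Phi^{[\beta]}_{\lambda_1}(\phi^{(\beta)}_m)^\ast$, hence $(\phi^{(\beta)}_m)^\ast\widetilde{\Phi}_{\lambda_1}=-(\Phi^{[\beta]}_{\lambda_1}+s_{\lambda_1})(\phi^{(\beta)}_m)^\ast$, the prefactor being a harmless spectator. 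Passing $(\phi^{(\beta)}_m)^\ast$ through $e^{-\theta}$ via Corollary~\ref{cor:sublemma}~(3) turns it into $e^{-\theta}(\phi^{(\beta)}_m-\beta\phi^{(\beta)}_{m+1}+\cdots)^\ast$, a combination of $(\phi^{(\beta)}_k)^\ast$ with $k\ge m>\lambda_1>\lambda_2=\lambda'_1$; each such term annihilates $\kket{\lambda'}$ by the induction hypothesis. The base case $r=1$ is immediate from $(\phi^{(\beta)}_m)^\ast\ket0=0$ for $m>0$ (the adjoint of \eqref{eq:ann_rule}). Note that in (C) the shift plays no role: the vanishing comes entirely from reaching $\ket0$, exactly as in Lemma~\ref{lemma:hojyo_1}~(C).
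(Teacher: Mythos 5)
Your proposal is correct and follows essentially the same route as the paper: (A) by isolating the residual $\phi_0$-term of $\Phi^{[\beta]}_n$ via \eqref{eq:Phi_expand}, (B) by induction on $\ell(\mu)$ using the adjoint of Lemma \ref{lemma:duality} together with the conjugation $e^{\theta}\Phi^{[\beta]}_n e^{-\theta}=\Phi^{[\beta]}_n+\beta\Phi^{[\beta]}_{n-1}$, where your scalar recursion $s_n=-\tfrac{\beta}{2}s_{n-1}$ is exactly the paper's rewriting $-\Phi^{[\beta]}_n-\beta\Phi^{[\beta]}_{n-1}-s_n=-(\Phi^{[\beta]}_n-s_n)-\beta(\Phi^{[\beta]}_{n-1}-s_{n-1})$, and (C) by pushing $(\phi^{(\beta)}_m)^\ast$ rightward through Corollary \ref{cor:sublemma}~(3), which is the argument the paper delegates to ``similar to Lemma \ref{lemma:hojyo_1}~(C).'' The only cosmetic difference is that you phrase (A)--(B) as the eigenvalue identity $\bbra{\mu}\Phi^{[\beta]}_n=s_n\bbra{\mu}$ rather than as vanishing of the shifted operator, which is trivially equivalent.
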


\begin{proof}
(A):
A direct calculation shows that
\[
\begin{aligned}
\bbra{\emptyset}(\Phi^{[\beta]}_n-\tfrac{1}{2}(-\tfrac{\beta}{2})^{n})
&=
\bra{0}(\phi_0+1)(\Phi_n^{[\beta]}-\tfrac{1}{2}(-\tfrac{\beta}{2})^n)\\
&=
\tfrac{1}{2}(-\tfrac{\beta}{2})^n
\bra{0}(\phi_0+1)(\phi_0-1)\qquad (\mathrm{Eq.~\eqref{eq:Phi_expand}})\\
&=
\tfrac{1}{2}(-\tfrac{\beta}{2})^n
\bra{0}(\phi_0^2-1)\\
&=0 \qquad (\phi_0^2=1).
\end{aligned}
\]

(B):
Let $\mu=(\mu_1>\dots>\mu_s\geq 0)$ and $\mu'=(\mu_2>\dots>\mu_s\geq 0)$.
We show (B) by induction on $s\geq 1$. By assumption, we have $n>0$ and
\begin{align*}
\bbra{\mu}(\Phi^{[\beta]}_n-\tfrac{1}{2}(-\tfrac{\beta}{2})^{n})
&=
\bbra{\mu'}e^{\theta}(\phi^{(\beta)}_{\mu_1})^\ast(\Phi^{[\beta]}_n-\tfrac{1}{2}(-\tfrac{\beta}{2})^{n})\\
&=
\bbra{\mu'}e^{\theta}(-\Phi^{[\beta]}_n-\tfrac{1}{2}(-\tfrac{\beta}{2})^{n})(\phi^{(\beta)}_{\mu_1})^\ast\qquad (\text{Lemma~\ref{lemma:duality}})\\
&=
\bbra{\mu'}(-\Phi^{[\beta]}_n-\beta\Phi^{[\beta]}_{n-1} -\tfrac{1}{2}(-\tfrac{\beta}{2})^{n})e^{\theta}(\phi^{(\beta)}_{\mu_1})^\ast \qquad (\text{Corollary~\ref{cor:sublemma}}).
\end{align*}
Since
\[
-\Phi^{[\beta]}_n-\beta\Phi^{[\beta]}_{n-1} -\tfrac{1}{2}(-\tfrac{\beta}{2})^{n}=
-(\Phi^{[\beta]}_n-\tfrac{1}{2}(-\tfrac{\beta}{2})^n)
-\beta (\Phi^{[\beta]}_{n-1}-\tfrac{1}{2}(-\tfrac{\beta}{2})^{n-1})
,
\]
we see that the vector 
$
\bbra{\mu}(\Phi^{[\beta]}_n-\tfrac{1}{2}(-\tfrac{\beta}{2})^{n})
$
is a linear combination of 
\[
\bbra{\mu'}(\Phi^{[\beta]}_n-\tfrac{1}{2}(-\tfrac{\beta}{2})^{n})e^{\theta}(\phi^{(\beta)}_{\mu_1})^\ast
\qquad \text{and}\qquad  
\bbra{\mu'}(\Phi^{[\beta]}_{n-1}-\tfrac{1}{2}(-\tfrac{\beta}{2})^{n-1})e^{\theta}(\phi^{(\beta)}_{\mu_1})^\ast.
\]
When $s=1$, the desired identity reduces to (A).
For general $s>1$, (B) follows from the induction hypothesis.

(C):
This claim can be proved in a manner similar to that of the proof of Lemma \ref{lemma:hojyo_1} (C).
\end{proof}

\begin{proof}[Proof of Theorem \ref{thm:fermion_of_gp}]

Let $E=\iinner{\mu}{\lambda}$.
To prove the theorem, it suffices to verify \eqref{eq:mokuhyou}.
We show \eqref{eq:mokuhyou} by induction on $s\geq 0$.
We begin by considering the following base cases:
\begin{equation}\label{eq:base_case}
\iinner{\mu}{\emptyset}=
\begin{cases}
1 & (\mu=\emptyset),\\
1 & (\mu=(0)),\\
0 & (\text{otherwise}),\\
\end{cases}\qquad
\iinner{\emptyset}{\lambda}
=
\begin{cases}
1 & (\lambda=\emptyset),\\
0 & (\lambda\neq \emptyset).
\end{cases}
\end{equation}

The first equation in \eqref{eq:base_case} can be verified as follows: When $\mu=\emptyset$, we have $E=\bra{0}(\phi_0+1)\ket{0}=1$.
When $\mu=(0)$, we have \[
E=
\bra{0}(\phi_0+1)e^\theta(\phi_0^{(\beta)})^\ast\ket{0}
=\bra{0}(\phi_0+1)e^\theta\phi_0\ket{0}
=\bra{0}(\phi_0^2+\phi_0)\ket{0}=1.
\]
When $\mu\neq \emptyset$ and $\mu\neq(0)$, we have $E=0$ from Lemma \ref{lemma:hojyo2} (C).

The second equation in \eqref{eq:base_case} can be verified as follows: We have already shown $E=1$ when $\lambda=\emptyset$.
When $\lambda\neq \emptyset$, we have $E=0$ from Lemma \ref{lemma:hojyo2} (A).

We proceed for the case when $s,r>0$.
If $\mu_1<\lambda_1$, then $E=0$ by Lemma \ref{lemma:hojyo2} (B).
If $\mu_1>\lambda_1$, then $E=0$ by Lemma \ref{lemma:hojyo2} (C).
If $\mu_1=\lambda_1(>0)$, then we have
\[
\begin{aligned}
E
&=
\bbra{\mu'}e^{\theta}(\phi^{(\beta)}_{\mu_1})^\ast
(\Phi^{[\beta]}_{\lambda_1}-\tfrac{1}{2}(-\tfrac{\beta}{2})^{\lambda_1})e^{-\theta}\kket{\lambda'}\\
&=
\bbra{\mu'}e^{\theta}
\left[
(\phi^{(\beta)}_{\mu_1})^\ast,
\Phi^{[\beta]}_{\lambda_1}
\right]_+
e^{-\theta}\kket{\lambda'}
-
\bbra{\mu'}e^{\theta}
\Phi^{[\beta]}_{\lambda_1}
(\phi^{(\beta)}_{\mu_1})^\ast
e^{-\theta}\kket{\lambda'}
-\tfrac{1}{2}(-\tfrac{\beta}{2})^{\lambda_1}
\bbra{\mu'}e^{\theta}
(\phi^{(\beta)}_{\mu_1})^\ast
e^{-\theta}\kket{\lambda'}\\
&
=
\iinner{\mu'}{\lambda'}
-
\bbra{\mu'}e^{\theta}
\Phi^{[\beta]}_{\lambda_1}
(\phi^{(\beta)}_{\mu_1})^\ast
e^{-\theta}\kket{\lambda'}
-\tfrac{1}{2}(-\tfrac{\beta}{2})^{\lambda_1}
\bbra{\mu'}e^{\theta}
(\phi^{(\beta)}_{\mu_1})^\ast
e^{-\theta}\kket{\lambda'}
\qquad 
(\mathrm{Lemma\,\ref{lemma:duality}}).
\end{aligned}
\]
The last two terms in the last expression equal to $0$ because
\[
(\phi^{(\beta)}_{\mu_1})^\ast
e^{-\theta}\kket{\lambda'}=
e^{-\theta}
(\phi^{(\beta)}_{\mu_1}-\beta \phi^{(\beta)}_{\mu_1+1}+\cdots)^\ast
\kket{\lambda'}
=
0
\]
by Lemma~\ref{lemma:hojyo2}\,(C).
Hence, we obtain $E=\iinner{\mu'}{\lambda'}$, where
$\mu'=(\mu_2>\dots>\mu_s\geq 0)$ and $\lambda'=(\lambda_2>\dots>\lambda_r>0)$.
By induction hypothesis, we conclude \eqref{eq:mokuhyou}.
\end{proof}

\subsection{Generating function of $gp_\lambda$}

Let $gp_n=gp_{(n)}$ be the $gp$-function corresponding to the one-row partition $(n)$.
From Theorem \ref{thm:fermion_of_gp}, we have
\begin{align*}
\sum_{n=1}^\infty gp_nz^n
&=\sum_{n=1}^\infty \bra{0}e^{\mathcal{H}^{[\beta]}}\ket{(n)}_p\cdot z^n
=\sum_{n=1}^\infty
\bra{0}e^{\mathcal{H}^{[\beta]}}\ket{(n)}^+_p\cdot z^n\\
&=\sum_{n=1}^\infty\bra{0}e^{\mathcal{H}^{[\beta]}}\left(\Phi^{[\beta]}_{n}-\tfrac{1}{2}(-\tfrac{\beta}{2})^{n}\right)
(\phi_0+1)\ket{0}\cdot z^n\allowdisplaybreaks\\
&=
\bra{0}e^{\mathcal{H}^{[\beta]}}\left(\Phi^{[\beta]}(z)-\frac{1}{2+\beta z}\right)
(\phi_0+1)\ket{0}
-
\bra{0}e^{\mathcal{H}^{[\beta]}}\left(\Phi^{[\beta]}_0-\frac{1}{2}\right)
(\phi_0+1)\ket{0}.
\end{align*}
From \eqref{eq:ann_Phi_0}, the second term of the last expression equals to
\[
\bra{0}e^{\mathcal{H}^{[\beta]}}\left(\frac{1}{2}\phi_0-\frac{1}{2}\right)
(\phi_0+1)\ket{0}
=
\frac{1}{2}
\bra{0}(\phi_0^2-1)\ket{0}=0.
\]
Therefore, we have
\begin{align*}
\sum_{n=1}^\infty gp_nz^n
&=
\bra{0}e^{\mathcal{H}^{[\beta]}}\left(\Phi^{[\beta]}(z)-\frac{1}{2+\beta z}\right)
(\phi_0+1)\ket{0}\allowdisplaybreaks\\
&=\frac{1}{2+\beta z}
\bra{0}e^{\mathcal{H}^{[\beta]}}
\left(
\phi^{[\beta]}(z)-1
\right)(\phi_0+1)\ket{0}\allowdisplaybreaks\\
&=\frac{1}{2+\beta z}(gq(z)-1).
\end{align*}

For general $\lambda$, it follows from Theorem \ref{thm:fermion_of_gp} that
\begin{equation}\label{eq:gp_generating}
gp_\lambda=[z_1^{\lambda_1}\dots z_r^{\lambda_r}]
\left\langle
e^{\mathcal{H}^{[\beta]}}
\left(
\Phi^{[\beta]}(z_1)-\frac{1}{2+\beta z_1}
\right)
e^{-\theta}
\cdots
e^{-\theta}
\left(
\Phi^{[\beta]}(z_r)-\frac{1}{2+\beta z_r}
\right)
(\phi_0+1)\allowdisplaybreaks
\right\rangle.
\end{equation}
Calculating the vacuum expectation value on the right hand side of \eqref{eq:gp_generating} is quite involved, because the operator inside the brackets mixes odd and even operators. 
Apparently, the only possible approach is to expand it into $2^r$ terms as follows:
\[
\begin{aligned}
&\prod_{i=1}^r\frac{1}{2+\beta z_i}
\left\langle
e^{\mathcal{H}^{[\beta]}}
\left(
\phi^{[\beta]}(z_1)-1
\right)
e^{-\theta}
\cdots
e^{-\theta}
\left(
\phi^{[\beta]}(z_r)-1
\right)
(\phi_0+1)
\right\rangle\\
&=
\prod_{i=1}^r\frac{1}{2+\beta z_i}
\sum_{
\substack{
0\leq a\leq r\\
i_1<i_2<\dots<i_a
}
}
(-1)^{r-a}I_{i_1,\dots,i_a}
\end{aligned}
\]
where 
\[
I_{i_1,\dots,i_a}:=\left\langle
e^{\mathcal{H}^{[\beta]}}
e^{-(i_1-1)\theta}
\phi^{[\beta]}(z_{i_1})
e^{-(i_2-i_1)\theta}
\phi^{[\beta]}(z_{i_2})
\cdots
e^{-(i_a-i_{a-1})\theta}
\phi^{[\beta]}(z_{i_a})
(\phi_0+1)
\right\rangle.
\]
By similar calculations to those used to derive \eqref{eq:gq_gen}, the vacuum expectation value $I_{i_1,\dots,i_a}$ can be computed as follows:
\begin{align}
I_{i_1,\dots,i_a}
&=\prod_{\kappa = 1}^a
\frac{1}{(1+\beta z_{i_\kappa})^{i_\kappa-1}}
\left\langle
e^{\mathcal{H}^{[\beta]}}
\phi^{[\beta]}(z_{i_1})
\cdots
\phi^{[\beta]}(z_{i_a})
(\phi_0+1)
\right\rangle\qquad (\text{Lemma \ref{lemma:commutation_rels} \eqref{item:2-1}, \eqref{item:2-2}})
\label{eq:derivation_of_I}
\\
&=\prod_{\kappa = 1}^a
\frac{gq(z_{i_\kappa})}{(1+\beta z_{i_\kappa})^{i_\kappa-1}}
\prod_{b<c}\frac{z_{i_b}-z_{i_c}}{z_{i_b}\oplus z_{i_c}}
\nonumber\\
&=\prod_{\kappa = 1}^a
\frac{gq(z_{i_\kappa})}{(1+\beta z_{i_\kappa})^{i_\kappa-\kappa}}
\prod_{b<c}\frac{z_{i_b}\ominus z_{i_c}}{z_{i_b}\oplus z_{i_c}}.\nonumber
\end{align}

\begin{prop}\label{prop:gen_of_gp}
The $gp$-function satisfies the following equation:
\[
gp_\lambda=
[z_1^{\lambda_1},\dots,z_r^{\lambda_r}]
\left(
\prod_{i=1}^r\frac{1}{2+\beta z_i}
\sum_{
\substack{
0\leq a\leq r\\
i_1<i_2<\dots<i_a
}
}
(-1)^{r-a}I_{i_1,\dots,i_a}
\right),
\]
where $I_{i_1,\dots,i_a}$ is the rational function given in \eqref{eq:derivation_of_I}, expanded in the field $\QQ(\beta)((z_r))\dots ((z_2))((z_1))$.
\end{prop}

\bibliographystyle{plain}
\bibliography{Groth}

\appendix

\section{Proof of \eqref{eq:substitution_GQ}}

Let
\[
F(u_1,\dots,u_r):=
\prod_{i=1}^r
\frac{1}{1+\beta u_i}
\prod_{i,j}\frac{u_i\oplus x_j}{u_i\ominus x_j}
\prod_{1\leq i<j\leq r}\frac{u_j\ominus u_i}{u_j\oplus u_i}
\]
be the rational function on the right hand side of \eqref{eq:gen_of_GQ}.
The rational function $F(u_1,\dots,u_r)$ is understood as an element of the field $\QQ(\beta)((u_r))\dots ((u_2))((u_1))[[x_1,x_2,\dots]]
$ via the Laurent expansion on the domain $D_{(r)}:=\{|x_j|<|u_1|<|u_2|<\dots<|u_r|<|\beta^{-1}|:\forall j\}$.

Putting $r\mapsto r+1$ and substituting $\lambda_{r+1}=0$ to \eqref{eq:gen_of_GQ}, we have
\[
\begin{aligned}
GQ_{(\lambda,0)}
&
=[u_1^{-\lambda_1}\dots u_{r}^{-\lambda_r}u_{r+1}^0]
\prod_{i=1}^{r+1}\frac{1}{1+\beta u_i}
\prod_{i,j}\frac{u_i\oplus x_j}{u_i\ominus x_j}
\prod_{1\leq i<j\leq r+1}\frac{u_j\ominus u_i}{u_j\oplus u_i}\\
&=
[u_1^{-\lambda_1}\dots u_{r}^{-\lambda_r}]
\prod_{i=1}^{r}\frac{1}{1+\beta u_i}
\prod_{i,j}\frac{u_i\oplus x_j}{u_i\ominus x_j}
\prod_{1\leq i<j\leq r}\frac{u_j\ominus u_i}{u_j\oplus u_i}\\
&\hspace{5em}\times
\frac{1}{2\pi i}
\oint_{u_{r+1}\in D_{(r+1)}}
\frac{1}{1+\beta u_{r+1}}
\prod_j\frac{u_{r+1}\oplus x_j}{u_{r+1}\ominus x_j}
\prod_{i=1}^r\frac{u_{r+1}\ominus u_i}{u_{r+1}\oplus u_i}
\frac{du_{r+1}}{u_{r+1}}\\
&
=[u_1^{-\lambda_1}\dots u_{r}^{-\lambda_r}]
\prod_{i=1}^{r}\frac{1}{1+\beta u_i}
\prod_{i,j}\frac{u_i\oplus x_j}{u_i\ominus x_j}
\prod_{1\leq i<j\leq r}\frac{u_j\ominus u_i}{u_j\oplus u_i}\\
&\hspace{5em}
\times
\mathop{\mathrm{Res}}_{w=-\beta}
\left(
-\frac{1}{1+\beta w^{-1}}
\prod_j\frac{w^{-1}\oplus x_j}{w^{-1}\ominus x_j}
\prod_{i=1}^r\frac{w^{-1}\ominus u_i}{w^{-1}\oplus u_i}
\frac{dw}{w}
\right).\qquad (w^{-1}=u_{r+1})
\end{aligned}
\]
A straightforward calculation shows that the residue in the last expression equals $1$.
Therefore, we have $GQ_{(\lambda,0)}=GQ_\lambda$.

\end{document}